\theoremstyle{plain}
\newtheorem{thm}{Theorem}[section]
\newtheorem{cor}[thm]{Corollary}
\newtheorem{lem}[thm]{Lemma}
\newtheorem{claim}[thm]{Claim}
\newtheorem{prop}[thm]{Proposition}
\theoremstyle{definition}
\newtheorem{defn}[thm]{Definition}
\theoremstyle{remark}
\newtheorem{rem}[thm]{Remark}
\numberwithin{equation}{section}
\newcommand{\average}{{\mathchoice {\kern1ex\vcenter{\hrule height.4pt
width 6pt depth0pt} \kern-9.7pt} {\kern1ex\vcenter{\hrule
height.4pt width 4.3pt depth0pt} \kern-7pt} {} {} }}
\newcommand {\R}{\mathbb{R}}
\newcommand {\cE}{\mathcal{E}}
\newcommand {\cK}{\mathcal{K}}
\newcommand {\cC}{\mathcal{C}}
\newcommand {\bs}[1]{\boldsymbol{#1}}
\newcommand {\unl}[1]{\underline{#1}}
\newcommand {\littint}{\textstyle\int\displaystyle}
\begin{document}

\title[Sums of fractional Laplacians and phase transitions]
{An extension problem for sums of fractional Laplacians and 1-D symmetry of phase transitions}

\author{Xavier Cabr\'e}
\address{ICREA and Universitat Polit\`ecnica de Catalunya\\ Departament de Matem\`atica Aplicada I
\\
Diagonal 647, 08028 Barcelona, Spain}
\email{xavier.cabre@upc.edu}

\author{Joaquim Serra}
\address{Universitat Polit\`ecnica de Catalunya\\ Departament de Matem\`atica Aplicada I
\\
Diagonal 647, 08028 Barcelona, Spain}
\email{joaquim.serra@upc.edu}

\thanks{The authors were supported by MINECO grants MTM2011-27739-C04-01 and MTM2014-52402-C3-1-P}

\begin{abstract}
We study nonlinear elliptic equations for operators
corresponding to non-stable L\'evy diffusions.
We include a sum of fractional Laplacians of different orders.
Such operators are infinitesimal generators of non-stable (i.e., non self-similar) L\'evy processes.
We establish the regularity of solutions, as well as sharp energy estimates.
As a consequence, we prove a 1-D symmetry result for monotone solutions to Allen-Cahn type
equations with a non-stable L\'evy diffusion. These operators may still be realized as local operators
using a system of PDEs ---in the spirit of the extension problem of Caffarelli and Silvestre.
\end{abstract}

\date{}
\maketitle
\vspace {-.2in}
\begin{center}
{\em To Juan Luis V\'azquez, with friendship and admiration.}
\end{center}

\maketitle
\parskip = 0pt
\parindent = 12pt

\markboth{\hfill SUMS OF FRACTIONAL LAPLACIANS AND PHASE TRANSITIONS \hfill}{\hfill
SUMS OF FRACTIONAL LAPLACIANS AND PHASE TRANSITIONS\hfill}

\section{Introduction}

In this paper we study {\em layer solutions} of phase transition problems with a nonlocal diffusion.
The main novelty is that the diffusion operator that we consider does not have self-similarity properties.
For instance, we consider the nonlocal Allen-Cahn type equation
\begin{equation}\label{problema0}
\sum_{i=1}^K \mu_i (-\Delta)^{s_i} u + W'(u) = 0 \quad\text{in } \R^n\,,
\end{equation}
where $\mu_i>0$, $\sum \mu_i =1$, $0<s_1<\dots <s_K \le 1$, and $W$ is a double-well potential with wells of the same height located at $\pm 1$.
By definition, a {\em layer solution} is a solution which is monotone in the $x_n$ direction with limits $\pm 1$ as $x_n\to \pm \infty$. That is,
\begin{equation} \label{layer}
 u_{x_n}\ge 0 \quad\mbox{in }\R^n\quad \mbox{ and }\quad  \lim_{x_n\to \pm \infty} u(x', x_n) = \pm 1 \quad \mbox{for all }x'\in \R^{n-1}.
\end{equation}

Having always \eqref{problema0} in mind, we actually consider the more general equation
\begin{equation}\label{problema1}
L u +  W'(u) = 0 \quad\text{in } \R^n\,,
\end{equation}
where, for some $s_*\in (0,1)$, we have
\begin{equation}\label{int_lap}
L u =\int_{[s_*,1]}  (-\Delta)^{s} u \,\,\, d\mu(s).
\end{equation}
We assume that $\mu$ is a probability measure supported in $[s_*,1]$, i.e.,
\[ \mu \ge 0 \quad \mbox{and}\quad \mu\bigl([s_*,1]\bigr) = \mu(\R)=1.\]
The operator $L$ is the infinitesimal generator of a L\'evy process $Y(t)$ which is isotropic but not stable. 
It has different behaviors at large and small time scales. Heuristically, for a very small time step $h$, 
the distribution of  $Y(t+h)-Y(t)$ is, with probability $\mu\bigl([s,s+ds)\bigr)$,
that of a $2s$-stable L\'evy process.
This gives a probabilistic interpretation of $\mu$.

Recall that the fractional Laplacian is defined by
\begin{equation}\label{constant-cns}
(-\Delta)^s u(x) = c_n(s)\,\mbox{PV}\negmedspace\int_{\R^n} \frac{u(x)-u(y)}{|x-y|^{n+2s}}\,dy\,, 
\end{equation}
where
\begin{equation}\label{eq_c_n(s)}
c_n(s) = \pi^{-\frac{n}{2}} 2^{2s} \frac{\Gamma\bigl(\frac{n+2s}{2}\bigr)}{\Gamma(2-s)} s(1-s)\,.
\end{equation}
Equivalently, $(-\Delta)^s$ is the operator whose Fourier symbol is $|\xi|^{2s}.$

We may assume that
\begin{equation}\label{def-sstar}
s_* = \max\{s\,:\, \text{support}\,\mu \subset[s,1]\}\,.
\end{equation}
In the case of problem \eqref{problema0}, we have $s_*=s_1$, which is the relevant exponent in a  blow-down of the equation.

The double-well potential $W$ is assumed to satisfy
\begin{equation}\label{assumptionsW}
W\in C^3(\R)\,,\quad W(\pm1)=0\quad\mbox{and}\quad W(t)>0 \,\mbox{ for }\,t\neq\pm 1\,.
\end{equation}

Similarly as for scale invariant diffusions in \cite{SV1,SV2}, the appropriate energy functional for our problem is
\begin{equation}\label{energyfunct}
\cE(u,\Omega) =  \cK(u,\Omega)  + \int_{\Omega} W(u)\,dx, \quad \mbox{with }\cK(u,\Omega) = \littint \cK^s(u,\Omega) \,d\mu(s)\,,
\end{equation}
where, for $0<s<1$,
\begin{equation}\label{energyfunct1}
\cK^s(u,\Omega) = \frac{c_n(s)}{4} \iint_{(\R^n\times \R^n) \setminus (\mathcal C\Omega\times\mathcal C\Omega)} \frac{(u(x)-u(y))^2}{|x-y|^{n+2s}}\,dxdy\,,
\end{equation}
with $\mathcal C\Omega = \R^n \setminus \Omega$, and, for $s=1$,
\begin{equation}\label{energyfunct1s=1}
\cK^1(u,\Omega) = \frac 1 2 \int_\Omega |\nabla u|^2 \,dx.
\end{equation}

In this paper we establish an extension problem for the operator $L$. As a main application
we obtain the following 1-D symmetry result for layer solutions to \eqref{problema1}. Here  we assume $\mu(\{1\})=0$ since this will simplify significantly the notation throughout the paper. However the result also holds for $\mu(\{1\})>0$ with the same proof ---see Remark \ref{Rk1}. In the case of equation \eqref{problema0}, $\mu(\{1\})=0$
translates as $s_K<1$.

\begin{thm}\label{thm:1dsym}
Assume that $\mu(\{1\})=0$ and that $u\in L^{\infty}(\R^n)$ is a layer solution of \eqref{problema1}, that is, a solution satisfying \eqref{layer}. Assume that either $n=2$ and $s_*>0$, or that $n=3$ and $s_* \ge 1/2$, where $s_*$ is given by \eqref{def-sstar}.

Then, $u$ has 1-D symmetry. That is, $u(x)=u_0(a\cdot x)$ where $u_0:\R\rightarrow\R$ is a layer solution in dimension one of $L u_0 + W'(u_0)=0$ in $\R$ and $a\in \R^n$ is some unit vector.
\end{thm}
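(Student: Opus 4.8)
The plan is to follow the by-now classical strategy for one-dimensional symmetry of monotone (layer) solutions, adapted to the sum-of-fractional-Laplacians operator $L$ via the PDE extension problem announced in the paper. The three pillars are: (i) a \emph{monotonicity/stability} argument turning the layer hypothesis into a stable solution; (ii) a \emph{sharp energy estimate} $\cE(u,B_R)\le C R^{n-1}$ for $R\ge 2$, which is where the structure of $L$ and the restriction on $s_*$ (namely $s_*>0$ for $n=2$, $s_*\ge 1/2$ for $n=3$) enters; and (iii) a \emph{Liouville-type theorem} for the extended problem that forces the quotient of two positive solutions of the linearized equation to be constant.

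More precisely, I would first realize $u$ as the trace of a function $\bar u$ solving the local extension system associated to $L$ (the system of PDEs in the spirit of Caffarelli--Silvestre that the paper constructs); monotonicity $u_{x_n}\ge 0$ lifts to the extension, and differentiating the equation shows that $v:=\partial_{x_n}u\ge 0$ (with its extension $\bar v = \partial_{x_n}\bar u\ge 0$) solves the linearized equation $L v + W''(u)v = 0$. Having a nonnegative solution of the linearized equation is exactly the criterion (a Berestycki--Caffarelli--Nirenberg / Alberti--Ambrosio--Cabr\'e type statement, or its extension-problem analogue) that makes $u$ a \emph{stable} solution, i.e. the second variation $\delta^2\cE(u)$ is nonnegative on compactly supported perturbations. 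Next I would invoke the sharp energy estimate of the paper, $\cE(u, B_R)\le CR^{n-1}$, valid under the stated dimensional hypotheses on $s_*$; this is the crucial input and I expect it is proved earlier in the paper by a competitor/comparison argument with a suitable one-dimensional profile, carefully handling the nonlocal terms $\cK^s$ uniformly in $s\in[s_*,1]$ and integrating against $d\mu(s)$.

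Then comes the core geometric step: combining stability with the energy bound. One takes, for any fixed unit vector $e\in\R^n$ and any $\varphi\in C^\infty_c$, the test function $\varphi\, \partial_e u$ in the stability inequality; after the usual manipulation (using that $\partial_e u$ solves the linearized equation) one gets a Caccioppoli-type estimate controlling a weighted Dirichlet-type energy of $\sigma_e:=\partial_e u/\partial_{x_n}u$ by boundary terms. Choosing $\varphi$ to be a standard logarithmic cutoff on $B_R$ and using $\cE(u,B_R)\le CR^{n-1}$ gives, when $n=2$ (or $n=3$ with $s_*\ge1/2$), that this weighted energy of $\sigma_e$ over $\R^n$ vanishes; hence $\sigma_e$ is constant for every $e$, which means $\nabla u$ is everywhere parallel to a fixed vector $a$, i.e. $u(x)=u_0(a\cdot x)$. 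Finally one checks $u_0$ is itself a one-dimensional layer solution of $Lu_0+W'(u_0)=0$ using that $L$ acts on functions depending on one variable as a one-dimensional nonlocal operator (the symbol computation reduces $(-\Delta)^s$ of $u_0(a\cdot x)$ to the one-dimensional $(-\Delta)^s$ of $u_0$, then integrate $d\mu(s)$).

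The main obstacle is twofold and both difficulties are nonlocality-plus-non-scale-invariance. First, carrying out the Caccioppoli/cutoff computation for the extension \emph{system} (rather than a single equation) and making all constants uniform in $s$ so that the $d\mu(s)$-integration is legitimate; the lack of self-similarity means one cannot rescale to normalize, so every estimate must be genuinely uniform on $[s_*,1]$. Second, and more delicate, is the dimension-three borderline case $s_*=1/2$: there the energy estimate is only $\cE(u,B_R)\lesssim R^{2}$ and the logarithmic cutoff argument is critical, so one must be careful that the boundary error terms from the extension actually close up; this is the step I would expect to require the most care, mirroring the known subtlety of the $s=1/2$, $n=3$ case for a single fractional Laplacian. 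Once the energy estimate and the stability-to-parallel-gradient mechanism are in place, the reduction to the one-dimensional equation for $u_0$ is routine.
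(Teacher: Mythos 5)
Your overall architecture---realize $u$ through the family of $s$-extensions, feed a sharp energy estimate into a cutoff/Liouville argument for the ratios $\partial_i u/\partial_n u$, and conclude that the gradient is parallel to a fixed vector---is indeed the paper's route (your ``stability plus Caccioppoli'' framing is just the standard repackaging of the Liouville theorem, Theorem \ref{thm_liouville}, applied with $\sigma^i_s=\partial_i\tilde u_s/\partial_n\tilde u_s$ and weight $\varphi_s=\partial_n\tilde u_s$). However, several of your quantitative inputs are wrong, and at the borderline case this breaks the step as you describe it. The energy estimate is not $\cE(u,B_R)\le CR^{n-1}$: the correct bound is $\cE(u,B_R)\le C\Phi_{n,s_*}(R)$ with $\Phi_{n,s}$ as in \eqref{eq_phins}, which grows like $R^{n-2s_*}$ for $s_*<1/2$ and like $R^{n-1}\log R$ for $s_*=1/2$; moreover it is proved by the Ambrosio--Cabr\'e sliding argument (translating in $x_n$ and estimating the nonlocal flux through $\partial B_R$), not by the minimality/competitor comparison you suggest---the paper deliberately avoids proving that layers are minimizers, since that would require interior estimates in bounded domains for these non-scale-invariant operators. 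Consequently, at $n=3$, $s_*=1/2$ the true growth is $R^2\log R$, not $R^2$ as you assume, and a plain logarithmic cutoff then yields only a bounded quantity rather than a vanishing one; what closes the argument is the dyadic, Moschini-type Liouville theorem with the condition $\sum_j 1/F(2^{j+1})=\infty$, $F=\log$, which is exactly Theorem \ref{thm_liouville}. You would also need to transfer the trace energy bound to the extended cylinders $C_R$, since the cutoff computation lives upstairs; this is Lemma \ref{relationKdown_up_stairs} and Lemma \ref{lem:eg-est-adalt}, with errors again of size $\Phi_{n,s_*}(R)$, and is absent from your sketch.

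More importantly, the step you flag as ``the main obstacle'' is precisely the paper's key new point, and your proposal does not supply it: the Caccioppoli/stability manipulation cannot be run for each $s$ separately, because the extension system \eqref{system_extended} prescribes only the $\mu$-averaged Neumann flux, $-\littint d\mu(s)\,d(s)\lim_{\lambda\to0^+}\lambda^{1-2s}\partial_\lambda\tilde u_s=f(u)$, and says nothing about the flux of each individual $\tilde u_s$. The boundary terms in the argument for $\sigma^i_s$ cancel only after integrating in $d\mu(s)$ and using that all the extensions share the common trace $u$: one then obtains $\unl{\sigma^i}\,(\partial_n u\,\partial_i-\partial_i u\,\partial_n)f(u)\equiv 0$ on $\{\lambda=0\}$. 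Without this integrated cancellation (or an equivalent device), the route you outline does not close; so, while the surrounding strategy matches the paper, the proposal has genuine gaps at the system-coupling step and at the borderline growth rate.
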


A proof of the existence of a 1-D layer solution is given in Section \ref{secExistence} for the case of even potentials $W$.

Theorem \ref{thm:1dsym} is clearly inspired in a conjecture of De Giorgi \cite{DG} for the Allen-Cahn equation: $-\Delta u = u-u^3$ in all $\R^n$. This conjecture states that, if $n\le 8$, then solutions $u$ which are monotone in one variable must have 1-D symmetry.  This has been proved in dimensions $n=2$ by Ghoussoub and Gui \cite{GG}, $n=3$ by Ambrosio and Cabr\'e \cite{AC}, and for $4\le n\le 8$, when one assumes in addition that $u$ is a layer solution, by Savin \cite{S}.

For the related nonlocal equation, $(-\Delta)^s u + W'(u)=0$ in all $\R^n$, analog results have been found for $n=2$ and $s=1/2$ by Cabr\'e and Sol\`a-Morales \cite{C-SM}, for $n=2$ and $s\in(0,1)$
by Cabr\'e and Sire \cite{C-Si1,C-Si2}, and for $n=3$ and $s\in [1/2,1)$ by Cabr\'e and Cinti \cite{C-Cinti1,C-Cinti2}.

In this paper, we show how several arguments in \cite{GG,AC,C-SM,C-Cinti1,C-Cinti2, C-Si1} can be adapted to equation \eqref{problema1} to obtain 1-D symmetry results. In these papers, symmetry is deduced from a Liouville type theorem. Provided that $u$ satisfies certain energy estimates, this Liouville type theorem implies that any two directional derivatives of $u$ coincide up to a multiplicative constant. This is equivalent to the 1-D symmetry.
At the time of completing this article, all the known symmetry results for the simplest equation $(-\Delta)^su+W'(u)=0$ \cite{C-Cinti1,C-Cinti2, C-Si1, C-SM} are proven using the extension problem of Caffarelli and Silvestre \cite{CS}, which is used in the previous references to state and prove the Liouville theorem. The main novelty of the present paper is that we have a non scale invariant operator and the existence of an extension problem is a priori unclear. Here, we find the natural  extension problem, and how one can prove the symmetry result using it.
This new extension problem, discussed in Section \ref{sec6}, consists of a ``system'' of (possibly infinitely many) singular elliptic PDEs which are coupled by a single Neumann type boundary condition and a common trace constraint.

The  ideas of this paper could be useful in other contexts where an extension operator is known for a family of operators and one needs to consider also sums (or integrals) of these operators.

A crucial step towards the 1-D symmetry consists of establishing a sharp estimate for the energy of monotone solutions in a ball of radius $R\ge 2$. Let us define
\begin{equation}\label{eq_phins}
\Phi_{n,s}(R) =
\begin{cases}
R^{n-1}(R^{1-2s}-1)(1-2s)^{-1}\quad &\mbox{if } s\neq 1/2\,,\\
R^{n-1}\log R& \mbox{if } s=1/2\,.\\
\end{cases}
\end{equation}
A useful property of $\Phi_{n,s}(R)$ is that it is continuous and decreasing in $s$  for all $R> 2$.

The following result is proven in Section \ref{sec4}. Throughout the paper we use the notation $B_R=\{x\in\R^n, \ |x|<R\}$.
\begin{prop}\label{thm:eg-est}
Let $u$ be a layer solution of \eqref{problema1}, i.e., a solution satisfying \eqref{layer}. Then, for all $R\ge2$,
\[ \cE(u,B_R) \le C\Phi_{n,s_*}(R),\]
where $s_*$ is given by \eqref{def-sstar}, $\Phi_{n,s}$ is given by \eqref{eq_phins}, and $C$ depends only on $n$, $s_*$ and $W$.
\end{prop}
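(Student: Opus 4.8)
The plan is to bound the energy $\cE(u,B_R)$ by comparing the layer solution $u$ with a suitable competitor that agrees with $u$ outside a slightly smaller ball, and to reduce the estimate to the single worst exponent $s_*$. First I would exploit the monotonicity \eqref{layer}: since $u_{x_n}\ge 0$ and $u$ has limits $\pm 1$, for each $x'\in\R^{n-1}$ the slice $x_n\mapsto u(x',x_n)$ is increasing from $-1$ to $1$. I would introduce the competitor $v$ obtained by gluing $u$ inside $B_{R-1}$ (or rather on a suitable cylinder/ball) to the constant $+1$ where $x_n$ is large, $-1$ where $x_n$ is small, with a Lipschitz interpolation near the hyperplane $\{x_n=0\}$ in the transition region; equivalently one can use the standard cutoff competitor $v = \max\{-1,\min\{1,u+\eta\}\}$-type truncations, or better, the radial shift $v(x) = u(x)$ outside $B_R$ and $v$ capped at $\pm1$ inside — the key point being that $\cE(v,B_R)$ is controlled by $\Phi_{n,s}(R)$ because $v$ essentially only varies across an $(n-1)$-dimensional interface of measure $\sim R^{n-1}$.

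The second step is the minimality-type inequality. Since $u$ solves \eqref{problema1}, it is a critical point of $\cE$; for the energy comparison I would use that $u$ is in fact a \emph{minimizer} of $\cE$ with respect to perturbations supported in $B_R$ among functions with values in $[-1,1]$ — this follows from the layer (monotone) structure by the standard sliding/foliation argument (the monotone solution generates a foliation of $\R^{n+1}$ in the extension, or directly one compares with translates $u(\cdot + t e_n)$). Granting $\cE(u,B_R)\le \cE(v,B_R)$, it remains to estimate $\cE(v,B_R)$. The potential term $\int_{B_R}W(v)\,dx$ is bounded by $C R^{n-1}$ since $W(v)=0$ away from a slab of width $O(1)$ around the interface. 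For the kinetic term, for each fixed $s$ I would split the double integral in \eqref{energyfunct1} into: (i) $x,y$ both in a bounded neighborhood of the interface, giving a contribution $\lesssim R^{n-1}$; (ii) the long-range tails where one variable is far, which produce exactly the factor $(R^{1-2s}-1)(1-2s)^{-1}$ (or $\log R$ when $s=1/2$) upon integrating $|x-y|^{-n-2s}$ over the relevant regions — here the constant $c_n(s)$, which vanishes like $(1-s)$ as $s\to1$ and like $s$ as $s\to 0$, keeps everything uniform. This yields $\cK^s(v,B_R)\le C\,\Phi_{n,s}(R)$ with $C$ independent of $s$.

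The third and final step is integrating in $\mu$. Since $\mu$ is supported in $[s_*,1]$ and, as noted in the excerpt, $R\mapsto \Phi_{n,s}(R)$ is decreasing in $s$ for $R>2$, we get $\cK^s(v,B_R)\le C\,\Phi_{n,s}(R)\le C\,\Phi_{n,s_*}(R)$ for $\mu$-a.e.\ $s$, and integrating against the probability measure $\mu$ gives $\cK(v,B_R)\le C\,\Phi_{n,s_*}(R)$. Combined with the $O(R^{n-1}) = O(\Phi_{n,s_*}(R))$ bound on the potential term and on the near-interface kinetic contribution, this proves $\cE(u,B_R)\le C\,\Phi_{n,s_*}(R)$.

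I expect the main obstacle to be the justification that $u$ minimizes $\cE$ against $[-1,1]$-valued competitors in $B_R$: unlike the local or single-fractional case, the energy $\cE$ here is an integral over $s$ of the nonlocal energies, so one must check that the sliding argument (using the translates $u(\cdot+te_n)$ and the strong maximum principle for $L+W'(\cdot)$, or the foliation in the extended problem of Section \ref{sec6}) goes through uniformly in $s$ and that the cross (mixed-domain) terms in \eqref{energyfunct1} are handled correctly when one modifies $u$ only inside $B_R$. A secondary technical point is making the tail estimates in step two genuinely uniform in $s\in[s_*,1]$, which is exactly what the explicit form of $c_n(s)$ in \eqref{eq_c_n(s)} is designed to give.
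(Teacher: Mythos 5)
Your proposal takes the comparison-with-a-competitor route, which is precisely the strategy the paper mentions and deliberately avoids, and as written it contains a genuine gap at its load-bearing step: the assertion that a layer solution minimizes $\cE$ in $B_R$ among $[-1,1]$-valued competitors ``by the standard sliding/foliation argument''. The foliation argument of \cite{AAC} (as used in \cite{SV2,C-Cinti2}) does not follow from monotonicity alone: one must first construct an absolute minimizer of $\cE$ in the ball with exterior datum $u$ and know it is continuous in the interior, so that the translates $u(\cdot+te_n)$ can be slid against it via the strong maximum principle. That construction requires interior regularity for the Dirichlet problem for $L=\littint(-\Delta)^s\,d\mu(s)$ in \emph{bounded domains}, and for a general measure $\mu$ this is exactly the delicate point flagged in the introduction: when $\mu$ accumulates at $s=1$ the operator is no longer nonlocal at every scale, Silvestre's estimates \cite{Sil2} do not apply directly in a bounded domain, and one needs either sharp heat-kernel bounds \cite{Mimica} or a blow-up/compactness argument. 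The whole-space estimate of Proposition \ref{prop-regularity-mzrs} does not substitute for this. Your second and third steps (the competitor estimate $\cK^s(v,B_R)\le C\Phi_{n,s}(R)$ uniformly in $s$, and the integration against $\mu$ using that $\Phi_{n,s}(R)$ is decreasing in $s$ for $R>2$) are sound and coincide with what the Savin--Valdinoci computation would give for minimizers; the problem is solely that minimality of layers is left unproved, and the justification you invoke is the part that is nontrivial here.

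For contrast, the paper's proof (Proposition \ref{prop_3}) avoids minimality entirely, following Ambrosio--Cabr\'e \cite{AC}. One slides the solution, $u^t(x',x_n)=u(x',x_n+t)$, and computes $\frac{d}{dt}\cE(u^t,B_R)$ using the integration-by-parts formula \eqref{int_by_parts_formula} together with the equation satisfied by $u^t$, so that only the nonlocal flux term across $B_R$ survives. Integrating in $t\in(0,\infty)$, using $\partial_t u^t=\partial_{x_n}u^t\ge 0$, the uniform $C^{2,\gamma}(\R^n)$ bound of Proposition \ref{prop-regularity-mzrs} (which gives $|u^t(x)-u^t(y)|\le C\min\{1,|x-y|\}$), Fubini, Claim \ref{claim:energy}, and the monotonicity of $\Phi_{n,s}$ in $s$, one obtains $\cE(u,B_R)-\cE(\overline u,B_R)\le C\Phi_{n,s_*}(R)$ with $\overline u\equiv 1$, whence the proposition since $\cE(1,B_R)=0$. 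If you wish to salvage your route you must either supply the bounded-domain regularity theory needed for the foliation argument, or switch to this sliding computation, which only uses the whole-space estimates already at hand.
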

Related energy estimates for the pure fractional Laplacian ---also for minimizers--- have been obtained in \cite{AC,C-Cinti2, C-Si1,SV2}.

An strategy to prove Proposition \ref{thm:eg-est} could be to show first that layer solutions are minimizers of the energy $\cE$ in every ball and to compare the energy of $u$ with some explicit competitor. This was done by Savin and
Valdinoci in \cite{SV2} for $L=(-\Delta)^s$ and their proof (with minor modifications) would give also the correct
energy estimate for minimizers of our energy $\mathcal E$. However, this requires to prove that layer solutions are minimizers via the standard foliation argument from \cite{AAC}.
For this, one needs regularity estimates for solutions to \eqref{problema1} in {\em bounded domains}. 
These estimates, in bounded domains and for general $L$ of the form \eqref{int_lap}, turn out to be true (see comments below) but more intricate than the estimate in the whole space, given by Proposition \ref{prop-regularity-mzrs} below. By this reason, to prove the energy estimate of Proposition \ref{thm:eg-est}, we follow a different approach \`a la  Ambrosio-Cabr\'e \cite{AC}, which allows to obtain  the estimate (for layers, not for minimizers) in a more straight-forward way.

Although for simplicity we only prove energy estimates and 1-D symmetry for layer solutions, our proofs can be adapted to the setting of minimizers. As said above, the energy estimate for minimizers can be obtained with the same comparison argument as in \cite{SV2}. Then, all the other proofs in the paper apply to minimizers up to standard modifications ---see \cite{C-Cinti2}.

Let us be more precise about the regularity issues commented above. First note that when $L$ is a finite sum of fractional
Laplacians as in \eqref{problema0}, then $L$ equals $(-\Delta)^{s_K}$ plus lower order operators, and thus  by
totally standard arguments one obtains optimal interior regularity estimates for $Lu=g$,  in all of $\R^n$ and in bounded
domains.
For infinite sums or integrals the situation  may be not as simple. To see it, consider the example
\[ \mathcal{L} u := \sum_{k\ge 1}\frac{1}{2^k} (-\Delta)^{\beta-\frac{1}{2k}}u\]
for some $\beta\in(1/2,1]$. This operator is ``almost of order $2\beta$'',
but does not have a definite order. When $\beta<1$ the operator $\mathcal{L}$ is ``nonlocal at every scale'', 
and the method of Silvestre \cite{Sil2} gives a $C^\alpha$ estimate for solutions to $\mathcal{L}u =g$ in the bounded domain. This approach does not work, however, when $\beta=1$, since the operator ``degenerates'' to a local one at infinitesimal scales.

The proof of interior estimates in the bounded domain for $\mathcal{L}$ when $\beta=1$,
even if the operator is translation invariant, is not completely standard. Two different ways of obtaining these interior estimates
are the following. One is proving first a good enough  estimate in the whole space $\R^n$ (say $C^{1,\alpha}$) and deducing from it the estimate in the bounded domain. This estimate in the whole space can be obtained from sharp heat kernel estimates like the ones in \cite{Mimica}. A second possibility is to use a blow-up and compactness argument as in \cite{Serra,RosOton-Serra}, deducing interior regularity from a Liouville type theorem for entire solutions with certain growth properties.

Let us now quickly link the energy functional $\cE$ with problem \eqref{problema1} and make precise our notion of solution to \eqref{problema1}. The quadratic form $\cK(\cdot,\Omega)$ comes from a scalar product, which we denote by $\langle \cdot,\cdot\rangle_{\Omega}$. Namely,
\begin{equation}\label{eq:scalar_prod1}
\cK(u,\Omega)=\frac{1}{2}\langle u,u\rangle_{\Omega}\,.
\end{equation}
This scalar product is defined by
\begin{equation}\label{eq:scalar_prod2}
\langle u,v\rangle_{\Omega}= \littint \langle u,v\rangle_{\Omega,s} \,d\mu(s),
\end{equation}
where
\begin{equation}\label{eq:scalar_prod3}
\langle u,v\rangle_{\Omega,s} = \frac{c_n(s)}{2} \iint_{(\R^n\times \R^n) \setminus (\mathcal C\Omega\times\mathcal C\Omega)} \frac{(u(x)-u(y))(v(x)-v(y))}{|x-y|^{n+2s}}\,dxdy\,.
\end{equation}

Minimizers of $\cE$ (with respect to compact perturbations) are functions $u: \R^n \rightarrow \R$ that satisfy, for every
bounded domain $\Omega$, $\varepsilon >0$, and $\xi\in C_c^\infty(\Omega)$,
\[\begin{split}
\cE(u, \Omega)&\le \cE(u+\varepsilon\xi, \Omega) \\
&=\cK(u,\Omega) + \varepsilon^2 \cK (\xi,\Omega)+ \varepsilon \langle u,\xi\rangle_{\Omega} +\int_{\Omega} W(u+\varepsilon\xi)\,dx.
\end{split}
\]
Equivalently,
\[0\le  \varepsilon \cK (\xi,\Omega)+ \langle u,\xi\rangle_{\Omega} + \int_{\Omega} \frac{1}{\varepsilon}(W(u+\varepsilon\xi)-W(u))\,dx\]
for every bounded domain $\Omega\subset\R^n$, $\varepsilon>0$, and $\xi\in C^{\infty}_{c}(\Omega)$. Letting $\varepsilon\searrow 0$, we obtain
\begin{equation}\label{weakvs}
\langle u,\xi\rangle_{\Omega}+\int_\Omega W'(u)\xi\,dx = 0\quad \mbox{for every } \Omega\subset\subset \R^n \mbox{ and } \xi\in C^{\infty}_{c}(\Omega)\,.
\end{equation}
Equation \eqref{weakvs} is the weak version of \eqref{problema1}. We will say that a function $u\in L^{\infty}(\R^n)$ is a {\em weak solution} of \eqref{problema1} if $\cE(u,\Omega)<\infty$ and \eqref{weakvs} is satisfied for all $\Omega\subset\subset\R^n$ and $\xi\in C^{\infty}_{c}(\Omega)$.

The relation between the weak and the strong formulations of the problem is given by the integration by parts type formula
\begin{equation}\label{int_by_parts_formula}
\langle u,v\rangle_{\Omega}= \int_{\Omega} L u(x) v(x)\,dx + \littint d\mu(s)  \,c_n(s) \int_{\cC\Omega}dx\int_{\Omega}dy\frac{u(x)-u(y)}{|x-y|^{n+2s}} v(x)\,,
\end{equation}
that holds for $u,v\in C^2(\R^n)$ bounded. This formula is found integrating with respect to $\,d\mu(s)$ the well-known identities
\begin{equation}\label{int_by_parts_formulas}
\langle u,v\rangle_{\Omega,s}= \int_{\Omega} (-\Delta)^s u(x) v(x)\,dx + c_n(s) \int_{\cC\Omega}dx\int_{\Omega}dy\frac{u(x)-u(y)}{|x-y|^{n+2s}} v(x)\,.
\end{equation}
These identities are very elementary but useful, for instance in our proof in Section~\ref{sec4} of the energy estimate for monotone solutions. Note the last term on the right side can be interpreted as a nonlocal flux.
The identity \eqref{int_by_parts_formulas} is easily proven by writing  $(-\Delta)^s u$ as a singular integral and rearranging some terms. One needs only to observe that  \[\text{PV}\int_\Omega \,dx\int_\Omega\,dy \frac{(u(x)-u(y))v(x)}{|x-y|^{n+2s}} = -\text{PV}\int_\Omega \,dx\int_\Omega\,dy \frac{(u(x)-u(y))v(y)}{|x-y|^{n+2s}}.\]

On the one hand, using the integration by parts formula \eqref{int_by_parts_formula} in  \eqref{weakvs} we find that, when $u$ is a smooth enough weak solution, we have
\[
\int_{\Omega} Lu\  \xi\,dx +\int_\Omega W'(u)\xi\,dx = 0\quad \mbox{for every } \Omega\subset\subset \R^n \mbox{ and } \xi\in C^{\infty}_{c}(\Omega),
\]
and hence $u$ is a solution of \eqref{problema1}.

On the other hand, if $u$ is merely a measurable function $u:\R^n\rightarrow[-1,1]$ we can also give a notion of solution to \eqref{problema1}, now integrating
by parts in the opposite direction.  Since $\xi\in C^{\infty}_{c}(\Omega)$ in \eqref{weakvs}, we find that $\langle u,\xi\rangle_{\Omega}= \int_{\R^n} uL\xi\,dx$
and thus
\begin{equation}\label{veryweakvs}
\int_{\R^n}  u L\xi\,dx +\int_{\R^n}  W'(u)\xi\,dx = 0\quad \mbox{for every } \xi\in C^{\infty}_{c}(\Omega)\,.
\end{equation}
This is the notion of solution to \eqref{problema1} {\em in the sense of distributions}. Every weak solution is also a solution in the sense of distributions.

Next proposition concerns $C^{2,\gamma}$ regularity of weak solutions to \eqref{problema1}. It is proved in Section \ref{secreg} using a result of Silvestre \cite{Sil2}.
In fact, we prove regularity not only for weak solutions but also for solutions of the equation in the whole $\R^n$ in the sense of distributions.
\begin{prop}\label{prop-regularity-mzrs}
Let $u\in L^{\infty}(\R^n)$ with $|u|\le 1$ in all $\R^n$. Assume that $u$ satisfies  \eqref{veryweakvs} with $\Omega=\R^n$. Then, $u\in C^{2,\gamma}(\R^n)$ and
\[\|u\|_{C^{2,\gamma}(\R^n)}\le C\]
for some $\gamma>0$ and $C$ depending only on $n$, $s_*$ and $W$.
\end{prop}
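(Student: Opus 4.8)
The plan is to prove Proposition \ref{prop-regularity-mzrs} by a bootstrap argument starting from an interior Hölder estimate for the operator $L$. Since $L=\int_{[s_*,1]}(-\Delta)^s\,d\mu(s)$ with $\mu$ a probability measure on $[s_*,1]$, the operator is an integral of symmetric $2s$-stable generators and hence falls into the class of nonlocal operators with kernels $K(y)\,dy$ where $K(y)=\int_{[s_*,1]}c_n(s)|y|^{-n-2s}\,d\mu(s)$; this kernel is comparable to $c\,|y|^{-n-2s_*}$ near the origin, so $L$ satisfies the ellipticity hypotheses of Silvestre \cite{Sil2}. Therefore, for $u\in L^\infty(\R^n)$ solving $Lu=g$ in the sense of distributions with $g$ bounded, one obtains $u\in C^\alpha(\R^n)$ with $\|u\|_{C^\alpha(\R^n)}\le C(\|u\|_{L^\infty(\R^n)}+\|g\|_{L^\infty(\R^n)})$ for some $\alpha>0$. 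Here $g=-W'(u)$, which is bounded since $|u|\le 1$ and $W\in C^3$.

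Next I would bootstrap. The scheme is: once $u\in C^\alpha$, the right-hand side $-W'(u)$ is $C^{\min(\alpha,1)}$ (using $W\in C^3$ and $|u|\le 1$). Because the kernel of $L$ is smooth and comparable to that of $(-\Delta)^{s_*}$ at scale $1$, Schauder-type estimates for $L$ give $u\in C^{2s_*+\alpha'}$ for an appropriate $\alpha'$, as long as $2s_*+\alpha'$ stays away from integers; iterating finitely many times raises the regularity past $2$. The cleanest way to phrase the bootstrap is via the elementary observation used elsewhere in the paper: $L=(-\Delta)^{s_*}\circ M$ is not quite available, so instead I would appeal directly to interior Schauder estimates for the Lévy operator $L$ (these follow e.g. from the Fourier multiplier structure: the symbol of $L$ is $m(\xi)=\int_{[s_*,1]}|\xi|^{2s}\,d\mu(s)$, which for $|\xi|\ge 1$ is comparable to $|\xi|^{2s_*}$ up to logarithmic factors, and one has good derivative bounds). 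Since we only need $C^{2,\gamma}$ and not optimal regularity, a crude version suffices: each step gains a fixed amount of regularity, and finitely many steps reach $C^{2,\gamma}$ with a uniform constant depending only on $n$, $s_*$, and $W$.

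An equivalent and perhaps more self-contained route, avoiding any delicate Schauder theory for $L$ itself, is to split $L=(-\Delta)^{s_*}+\big(L-(-\Delta)^{s_*}\big)$ and observe that $R:=L-(-\Delta)^{s_*}=\int_{(s_*,1]}\big((-\Delta)^s-(-\Delta)^{s_*}\big)\,d\mu(s)$ is, on bounded functions, a ``lower order'' perturbation: for $u\in C^\beta$ with $\beta>2s_1'$ (some intermediate exponent) one has $Ru\in C^{\beta-2s_*+\delta}$ for some $\delta>0$, because the difference $(-\Delta)^s-(-\Delta)^{s_*}$ applied to $u$ gains regularity relative to $(-\Delta)^{s_*}u$. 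Then $(-\Delta)^{s_*}u=-W'(u)-Ru$ with right-hand side slightly more regular than a generic $C^{\beta-2s_*}$ function, and the classical Schauder estimates for $(-\Delta)^{s_*}$ (Silvestre \cite{Sil2}) bootstrap $u$ up to $C^{2,\gamma}$.

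The main obstacle is making the bootstrap genuinely uniform and respecting the non-integer constraints: one must avoid the borderline exponents $2s_*\in\{0,1,2,\dots\}$ and the case $s_*=1/2$ when it forces an exponent equal to $1$, which requires choosing the small gains $\alpha,\alpha',\delta$ appropriately at each step, and one must ensure the perturbation term $R$ really is controlled on all of $\R^n$ (not just locally) so that the constant $C$ depends only on $n$, $s_*$, $W$ and not on $u$. The estimate on $R$ near $s=s_*$ is the delicate point: $\mu$ may concentrate mass arbitrarily close to $s_*$, so the gain from $(-\Delta)^s-(-\Delta)^{s_*}$ degenerates; one handles this by noting $\|((-\Delta)^s-(-\Delta)^{s_*})u\|_{C^{\gamma}}\le C(s-s_*)\,|\log(s-s_*)|\,\|u\|_{C^{\gamma+2s_*}}$ type bounds, or more simply by using that $(-\Delta)^s u$ for $u\in C^{2s_*+\gamma}$ and $s\in[s_*,1]$, $s\le s_*+\gamma/2$, is bounded in $C^{\gamma/2}$ uniformly in $s$, and then integrating $d\mu$. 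Once this uniform control is in place, finitely many bootstrap steps yield $u\in C^{2,\gamma}(\R^n)$ with the asserted bound.
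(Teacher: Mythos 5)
Your opening step contains a genuine error, and the later ``repairs'' inherit it. You claim that the kernel $K(y)=\int_{[s_*,1]}c_n(s)|y|^{-n-2s}\,d\mu(s)$ is comparable to $|y|^{-n-2s_*}$ near the origin (and, later, that the symbol $m(\xi)$ is comparable to $|\xi|^{2s_*}$ for large $|\xi|$). Only the lower bound is true; near $y=0$ the kernel is governed by the \emph{largest} exponents in ${\rm supp}\,\mu$, which may approach (or equal) $1$. For instance, for $\mathcal L=\sum_k 2^{-k}(-\Delta)^{1-\frac 1{2k}}$ there is no fixed $\sigma<2$ and no constant $\Lambda$ with $K(y)\le \Lambda|y|^{-n-\sigma}$ for small $y$, so $L$ does not lie in the uniformly elliptic class of any fixed order and Silvestre's interior H\"older estimate \cite{Sil2} does not apply to $L$ directly: the operator degenerates to a local one at infinitesimal scales. (This is precisely the obstruction the paper points out; a direct application of \cite{Sil2} would only work if ${\rm supp}\,\mu\subset[s_*,1-\delta]$, with constants depending on $\delta$, which is not allowed here.) Your third route has the same problem in reverse: $R=L-(-\Delta)^{s_*}$ is \emph{higher} order than $(-\Delta)^{s_*}$, not a lower-order perturbation; for $s>s_*$, $(-\Delta)^su$ loses more derivatives than $(-\Delta)^{s_*}u$, so the claimed gain $Ru\in C^{\beta-2s_*+\delta}$ is false, and your uniform bound is only asserted for $s\le s_*+\gamma/2$, which does not cover ${\rm supp}\,\mu$. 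Hence neither bootstrap closes, and the constant cannot be made to depend only on $n$, $s_*$, $W$.

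The missing idea is the paper's factorization trick, which removes the degeneracy at order $2$ instead of pretending the operator has order $2s_*$. Set $\epsilon=s_*/2$ and $v=(-\Delta)^{\epsilon}u$; then $\tilde L v=w$ with $\tilde L=\littint_{[\epsilon,1-\epsilon]}(-\Delta)^{t}\,d\mu(\epsilon+t)$, whose orders lie in the compact interval $[s_*,2-s_*]\subset(0,2)$, so it stays nonlocal at every scale and the results of \cite{Sil2} give $\|v\|_{C^{\bar\alpha}(\R^n)}\le C(\|v\|_{L^\infty}+\|w\|_{L^\infty})$ with $\bar\alpha$, $C$ depending only on $n$, $s_*$. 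One then transfers this back to $u$ via Riesz potential estimates for $(-\Delta)^{\epsilon}u=v$, absorbing $\|v\|_{L^\infty}\le C\|u\|_{C^{\bar\alpha/2+2\epsilon}}$ by interpolation, and bootstraps with $w=-W'(u)$ (first mollifying $u$, which is legitimate since $L$ commutes with convolution, to justify the pointwise manipulations starting from the distributional equation \eqref{veryweakvs}). If you want to salvage your argument, you must either reproduce this factorization or supply a genuinely uniform regularity theory for mixtures of orders accumulating at $2$; as written, the proposal does not prove the proposition.
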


According to Proposition \ref{prop-regularity-mzrs}, layer solutions always satisfy equation \eqref{problema1} in the classical sense. Indeed, recall the well-known estimate (see the proofs of Theorems 2.5, 2.6 and 2.7 in \cite{Sil})
\begin{equation}\label{cotinuousoperator:C2gammatoCgamma}
\|(-\Delta)^s u\|_{C^{0,\gamma}(\R^n)} \le C \|u\|_{C^{2,\gamma}(\R^n)}\,,
\end{equation}
for every $u\in C^{2,\gamma}(\R^n)$, with $C$ uniform for $s\in[s_*,1)$  (depending only on $n$ and $s_*$).  Then, since $\mu$ is a probability measure,  $L u = \littint (-\Delta)^s u \,d\mu(s)$ is still in $C^{0,\gamma}(\R^n)$ and thus the equation is satisfied in the ``classical sense''.

The paper is organized as follows:
In Section \ref{secreg} we prove the regularity Proposition \ref{prop-regularity-mzrs}. In Section \ref{secExistence} we show the existence of 1-D layer solutions in the case of even potentials $W$.  In Section \ref{sec4} we prove
the energy estimate of Proposition \ref{thm:eg-est}. In Section \ref{sec6} we introduce the extension problem for the operator $L$ that allows us to reformulate problem \eqref{problema1} as a system of PDEs.
In Section \ref{sec7}, the last one,  we obtain a Liouville type theorem within the framework of the extension problem and we us it to prove the 1-D symmetry result, Theorem \ref{thm:1dsym}.

\section{Regularity} \label{secreg}

In this section we prove Proposition \ref{prop-regularity-mzrs}. It will be obtained by iterating the following

\begin{lem}\label{lem-reg}
Let $u\in L^\infty(\R^n)$ satisfy $L u=w$ in all of $\R^n$ in the sense of distributions. Assume that $w\in C^\beta(\R^n)$, $\beta\ge0$. Then, there exist $\alpha>0$ and $C$ depending only on $n$ and $s_*$ such that $u\in C^{\beta+\alpha}\bigl(\R^n\bigr)$ and
\[ \|u\|_{C^{\beta+\alpha}(\R^n)} \le C\bigl( \|u\|_{L^\infty(\R^n)}+ \|w\|_{C^\beta(\R^n)}\bigr)\]
\end{lem}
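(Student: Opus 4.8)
The plan is to bootstrap a single $C^\alpha$ regularity estimate for $L$ into the claimed gain of $\alpha$ derivatives over any $C^\beta$ datum, exploiting the scaling structure of $L$. The key external input is Silvestre's result \cite{Sil2}: since every $(-\Delta)^s$ with $s\in[s_*,1)$ is an operator satisfying the structural hypotheses of that paper (it is a translation-invariant operator built from a symmetric kernel comparable to $|z|^{-n-2s}$, and these comparability constants are uniform for $s$ away from $0$), and since $L=\int_{[s_*,1]}(-\Delta)^s\,d\mu(s)$ is an average of such operators, $L$ itself falls under the scope of \cite{Sil2}. Concretely, one obtains: if $Lu=w$ in $B_2$ in the distributional sense with $u\in L^\infty(\R^n)$ and $w\in L^\infty(B_2)$, then $u\in C^\alpha(B_1)$ with $\|u\|_{C^\alpha(B_1)}\le C(\|u\|_{L^\infty(\R^n)}+\|w\|_{L^\infty(B_2)})$, for some $\alpha>0$ and $C$ depending only on $n$ and $s_*$. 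The first step is therefore to check carefully that the kernel of $L$ (namely $K(z)=\int_{[s_*,1]} c_n(s)|z|^{-n-2s}\,d\mu(s)$ for $s_*<1$, plus a possible local $\mu(\{1\})\Delta$ term if one allows it) verifies Silvestre's ellipticity assumptions with constants uniform in the admissible range, and that the boundedness $c_n(s)\le C(n)$ for $s\in[s_*,1]$ (visible from \eqref{eq_c_n(s)}) is what makes the constants depend only on $n$ and $s_*$.

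Next I would prove the $\beta=0$ case of the Lemma, i.e.\ the global estimate $\|u\|_{C^\alpha(\R^n)}\le C(\|u\|_{L^\infty}+\|w\|_{L^\infty})$, from the interior estimate above by a standard translation-and-covering argument: apply the $B_1\subset B_2$ interior estimate centered at every point of $\R^n$, which is legitimate because $L$ is translation invariant and the hypotheses are global, and glue the local H\"older seminorms. Then comes the inductive step to general $\beta\ge 0$. Write $\beta=m+\sigma$ with $m\in\mathbb{N}_0$ and $\sigma\in[0,1)$. Here one uses that $L$ commutes with derivatives: if $Lu=w$ with $w\in C^\beta$, then formally $L(\partial^\kappa u)=\partial^\kappa w\in C^{\beta-|\kappa|}$ for each multi-index $\kappa$ with $|\kappa|\le m$; this identity is justified in the sense of distributions by differentiating the defining relation \eqref{veryweakvs} (move the derivative onto the test function, use translation invariance of $L$). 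Applying the already-established $\beta=0$ estimate to $\partial^\kappa u$ with $|\kappa|=m$ and datum $\partial^\kappa w\in C^\sigma\subset L^\infty$ gives $\partial^\kappa u\in C^\alpha$, hence $u\in C^{m+\alpha}$; but to land in $C^{\beta+\alpha}=C^{m+\sigma+\alpha}$ rather than merely $C^{m+\alpha}$, I would instead iterate the interior estimate at the level of H\"older datum, or equivalently first record a "$C^\sigma$ datum $\Rightarrow$ $C^{\sigma+\alpha}$ solution" version of Silvestre's estimate (interpolating/scaling, again uniform in $s_*$) and then apply that to $\partial^\kappa u$. The cleanest packaging is: prove Lemma for $\beta\in[0,1)$ directly from the H\"older-Schauder version of \cite{Sil2}, then differentiate $m$ times to get the general $\beta$.

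A subtlety to handle explicitly: the gain $\alpha$ must not shrink to $0$ as we iterate or as $\beta$ grows. Since $\alpha$ depends only on $n$ and $s_*$ (not on $\beta$, because after differentiating we always reduce to a datum in $C^\sigma$ with $\sigma\in[0,1)$ and the same operator $L$), this is fine; one just needs to note that Proposition \ref{prop-regularity-mzrs} then follows by applying the Lemma finitely many times starting from $w=-W'(u)\in C^0$ (as $u\in L^\infty$, $|u|\le1$, and $W\in C^3$), gaining $\alpha$ each time until reaching exponent $>2$, and that at the last step one may take $\gamma=\min(\alpha,\ldots)$ so that $u\in C^{2,\gamma}$; one must also check the bootstrap is consistent, i.e.\ once $u\in C^{1,\alpha'}$ we have $W'(u)\in C^{1,\alpha'}$ since $W\in C^3$, etc. I expect the main obstacle to be the first step: verifying that the average-of-fractional-Laplacians kernel $K(z)=\int c_n(s)|z|^{-n-2s}\,d\mu(s)$ genuinely satisfies the hypotheses of \cite{Sil2} with constants controlled solely by $n$ and $s_*$ — in particular the upper and lower scaling/ellipticity bounds on $K$, which could degenerate if $\mu$ concentrates near $s=1$ (the operator becoming "almost local"), and the companion point of checking that the distributional formulation \eqref{veryweakvs} is the one to which Silvestre's viscosity/distributional theory applies. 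This is exactly the degeneracy issue the authors flag in the introduction for the $\beta=1$ example, so some care (or an auxiliary argument handling the near-local regime, e.g.\ comparing with $-\Delta$) will be needed there.
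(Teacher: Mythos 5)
Your plan stands or falls on its first step --- applying Silvestre's H\"older estimate directly to $L$ --- and that is exactly where it breaks down, as you yourself suspect at the end. The hypotheses of \cite{Sil2} require a kernel comparable, uniformly at all scales, to that of a fractional Laplacian of a fixed order bounded away from $2$; the kernel $K(z)=\int_{[s_*,1]}c_n(s)|z|^{-n-2s}\,d\mu(s)$ does not satisfy this when $\mu$ charges every neighborhood of $s=1$ (which the Lemma must allow), since $c_n(s)\sim 1-s$ there and the operator has no definite order: it ``degenerates to a local operator at infinitesimal scales''. This is precisely the example $\mathcal L=\sum_{k\ge1}2^{-k}(-\Delta)^{1-\frac{1}{2k}}$ with $\beta=1$ discussed in the introduction, where the authors state that Silvestre's method does not work, and in their proof of the Lemma they stress that \cite{Sil2} applies to an auxiliary operator $\tilde L$ ``but not to $L$''. (Your uniformity remark ``for $s$ away from $0$'' points at the wrong endpoint: the degeneracy is at $s=1$.) Flagging the obstacle and proposing ``some care or an auxiliary argument, e.g.\ comparing with $-\Delta$'' does not close it; no such argument is supplied, and the whole content of the Lemma lies there.

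The paper's resolution is a factorization trick that your proposal lacks: set $\epsilon=s_*/2$ and $v=(-\Delta)^{\epsilon}u$, so that $\tilde L v=w$ with $\tilde L=\int_{[\epsilon,1-\epsilon]}(-\Delta)^t\,d\mu(\epsilon+t)$, an average of fractional Laplacians of orders confined to $[\epsilon,1-\epsilon]$, hence genuinely nonlocal at every scale and within the scope of \cite{Sil2} with constants depending only on $n$ and $s_*$. This yields $\|v\|_{C^{\bar\alpha}(\R^n)}\le C\bigl(\|v\|_{L^\infty(\R^n)}+\|w\|_{L^\infty(\R^n)}\bigr)$; then Riesz potential estimates for $(-\Delta)^{\epsilon}u=v$ give $\|u\|_{C^{\bar\alpha+2\epsilon}(\R^n)}\le C\bigl(\|u\|_{L^\infty(\R^n)}+\|v\|_{C^{\bar\alpha}(\R^n)}\bigr)$ together with $\|v\|_{L^\infty(\R^n)}\le C\|u\|_{C^{\bar\alpha/2+2\epsilon}(\R^n)}$, and an interpolation inequality absorbs the intermediate norm, giving the $\beta=0$ case with $\alpha=\bar\alpha+2\epsilon$ (after first mollifying $u$ and $w$, which is legitimate since $L$ commutes with convolution). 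The case $\beta>0$ is then obtained by applying the $\beta=0$ estimate to incremental quotients (of derivatives if $\beta>1$), which accomplishes what you propose to do by differentiating the equation and invoking an unproved Schauder-type version of Silvestre's estimate. Your closing remarks on bootstrapping Proposition \ref{prop-regularity-mzrs} are fine, but they are downstream of this missing key idea.
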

\begin{proof}
Since $L$ is linear and translation invariant, it commutes with convolution. Thus, by considering convolutions of $u$ and $w$ with a smooth approximation of the identity,  we may assume that $u$ and $w$ are smooth and that the equation holds in strong sense.

Let us consider first the case $\beta=0$.
Let $\epsilon=s_*/2$ and $v=(-\Delta)^{\epsilon} u$.
Then $v$ satisfies
\[\tilde L v = w\quad \mbox{in } \R^n,\]
where $\tilde L =\littint_{[\epsilon, 1-\epsilon]} d\mu(\epsilon+t)(-\Delta)^t$. Since $\tilde L$ is a convex combination of
fractional Laplacians $(-\Delta)^t$ with $t\in [\epsilon, 1-\epsilon]$, the results of Silvestre in \cite{Sil2} apply
to $\tilde L$ (but not to $L$). More precisely, see the proof of Theorem 5.4, Remark 4.3, Proposition 3.1, and Sections 3.1 and 3.2 of \cite{Sil2}. We obtain
\begin{equation} \label{silvestre}
\|v\|_{C^{\bar\alpha}(\R^n)} \le C\bigl( \|v\|_{L^\infty(\R^n)}+ \|w\|_{L^\infty(\R^n)}\bigr),
\end{equation}
where ${\bar\alpha}$ and $C$ depend only on $n$ and $s_*$ (we are using that $\mu$ is a probability measure).

But by classical  Riesz potential estimates \cite{Landkov}, since $(-\Delta)^\epsilon u=v$, we have
\begin{equation}\label{quinnom}
\|u\|_{C^{{\bar\alpha} +2\epsilon}(\R^n)} \le C\bigl( \|u\|_{L^\infty(\R^n)}  + \|v\|_{C^{\bar\alpha}(\R^n)} \bigr)
\end{equation}
and, since $\bar\alpha/2+2\epsilon>2\epsilon$,
\begin{equation}\label{quinnom2}
 \|v\|_{L^\infty(\R^n)}  \le  C \|u\|_{C^{\bar\alpha/2+ 2\epsilon}(\R^n)}.
 \end{equation}

Therefore it follows from \eqref{silvestre}, \eqref{quinnom}, and \eqref{quinnom2} that
\[ \begin{split}
 \|u\|_{C^{{\bar\alpha}+2\epsilon}(\R^n)} &\le C\bigl( \|u\|_{L^\infty(\R^n)}  + \|v\|_{L^\infty(\R^n)} +  \|w\|_{L^\infty(\R^n)} \bigr)
 \\ &\le C \bigl( \|u\|_{C^{{\bar\alpha}/2+2\epsilon} (\R^n)}+ \|w\|_{L^\infty(\R^n)} \bigr) .
 \end{split}
 \]
Thus, the estimate of the lemma with $\alpha = \bar\alpha+2\epsilon$  and $\beta=0$ follows using a standard interpolation inequality.

The cases $\beta>0$ follow applying the previous case to incremental quotients (of derivatives if $\beta>1$) of $u$ and $w$.
\end{proof}

Finally, we prove Proposition \ref{prop-regularity-mzrs}.
\begin{proof}[Proof of Proposition \ref{prop-regularity-mzrs}]

Since $u$ and $W'(u)$ belong to $L^{\infty}(\R^n)$, Lemma \ref{lem-reg} applied with $\beta=0$ yields the bound
$\|u\|_{C^{\alpha}(\R^n)}\le C$ for some $\alpha$ depending only on $n$ and $s_*$,  and some $C$ depending on $n$, $s_*$, $W$. But $W'$ is a $C^2$ function and hence we find also a bound for $\|W'(u)\|_{C^{\alpha}(\R^n)}$. This starts a standard  bootstrap argument that leads, after using Lemma \ref{lem-reg} $\lceil 2/\alpha\rceil$ times, to the estimate $\|u\|_{C^{2,\gamma}(\R^n)}\le C$, where $\gamma= \lceil 2/\alpha\rceil\alpha-2$.
\end{proof}

\section{Existence of layer solutions}\label{secExistence}

In this section we prove that there exists a layer solution to $Lu + W'(u)=0$ in $\R$ in the case that $W$ is even.
Uniqueness of layer solution in $\R$ holds in case $W''(\pm 1)>0$. We do not present here the details of the uniqueness proof.
It is based in the sliding method, as in \cite{C-SM, C-Si2}.

Our existence result relies on the a priori estimates proved in the previous section and on the following proposition on existence of a layer solution for the modified operator
\begin{equation}\label{Ldelta}
L_\delta = \delta(-\Delta) + (1-\delta)L,
\end{equation}
$\delta\in(0,1)$.
For this, we consider the energy functional
\[ \mathcal E_\delta\bigl(u, (-R,R)\bigr) = \frac{\delta}{2} \int_{-R}^R |\nabla u|^2 + (1-\delta) \mathcal K\bigl(u, (-R,R)\bigr) + \int_{-R}^R W(u) \,dx.\]
The gain in considering $L_\delta$ instead of $L$ is that the new operator is the (minus) Laplacian plus lower order terms, and hence it has interior regularity
estimates inherited from those of the Laplacian.

\begin{prop}\label{propdelta}
Let $L_\delta$ be defined by \eqref{Ldelta}. Then, if $W$ is even, i.e. $W(-t)=W(t)$, there exists a  bounded odd solution $L_\delta u + W'(u)=0$ in $\R$  satisfying $|u|\le 1$ with $u(0)=0$ and $\lim_{x\to \pm \infty} u(x) = \pm 1$. Moreover, $u\in C^{2,\alpha}(\R)$ for some $\alpha\in(0,1)$ and it is a minimizer of $\mathcal E_\delta$ with respect to smooth compactly supported perturbations $\xi$ with $|\xi|\le 1$.
\end{prop}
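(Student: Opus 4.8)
The plan is to obtain the odd layer solution as a limit of minimizers of the truncated energy functional $\mathcal E_\delta(\cdot, (-R,R))$ on an interval, and then to pass to the limit $R\to\infty$. First I would fix a large $R$ and minimize $\mathcal E_\delta(u, (-R,R))$ over the class of functions $u$ with $|u|\le 1$, $u\equiv 1$ on $(R,\infty)$ and $u\equiv -1$ on $(-\infty,-R)$ (these exterior data make $\mathcal K$ finite and provide the correct boundary behavior). Existence of a minimizer $u_R$ follows from the direct method: the functional is coercive and weakly lower semicontinuous with respect to the natural fractional Sobolev-type topology induced by the scalar product $\langle\cdot,\cdot\rangle$ in \eqref{eq:scalar_prod2} together with the $\delta\int|\nabla u|^2$ term, and the constraint $|u|\le 1$ is closed under weak convergence. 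Because $W$ is even, one may symmetrize: replacing $u_R(x)$ by the odd rearrangement, or more simply noting that if $u_R$ is a minimizer then so is $-u_R(-x)$, and by strict convexity of the quadratic part one can arrange (after possibly averaging, using that the constraint set and functional are invariant under $u\mapsto -u(-\cdot)$) a minimizer that is odd; in particular $u_R(0)=0$. Monotonicity $u_R'\ge 0$ can be enforced by a cut-and-reflect/rearrangement argument or, alternatively, recovered a posteriori in the limit via the sliding method; I would use the standard one-dimensional rearrangement decreasing the Dirichlet-type energy (the Riesz-type kernel $|x-y|^{-1-2s}$ is symmetric decreasing, so $\cK^s$ does not increase under monotone rearrangement, and likewise the $|\nabla u|^2$ term), so one may take $u_R$ nondecreasing with the prescribed limits.

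Next I would derive the Euler–Lagrange equation: $u_R$ satisfies $L_\delta u_R + W'(u_R)=0$ weakly in $(-R,R)$ in the sense of \eqref{weakvs}. Since $L_\delta = \delta(-\Delta) + (1-\delta)L$ is $-\delta\Delta$ plus the bounded-order operator $(1-\delta)L$, interior elliptic regularity for the Laplacian (treating $(1-\delta)Lu_R + W'(u_R)$ as a right-hand side, which is bounded and in fact Hölder once $u_R$ is known to be Hölder — and $u_R\in C^\alpha_{loc}$ follows from De Giorgi–Nash–Moser for $-\delta\Delta u_R = g\in L^\infty$) gives, by a bootstrap exactly parallel to the proof of Proposition \ref{prop-regularity-mzrs}, uniform interior $C^{2,\alpha}$ bounds on $u_R$ on compact subsets, with constants independent of $R$. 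This is where the remark in the text — that $L_\delta$ has regularity inherited from the Laplacian — is used; note that for $L_\delta$ one does not need the subtle whole-space heat-kernel estimates, precisely because the Laplacian part is nondegenerate.

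Then I would let $R\to\infty$. The uniform $C^{2,\alpha}_{loc}$ bounds and Arzelà–Ascoli give a subsequence converging in $C^2_{loc}(\R)$ to a function $u$ with $|u|\le 1$, $u$ odd, $u(0)=0$, $u$ nondecreasing, and $L_\delta u + W'(u)=0$ in $\R$ (the nonlocal term passes to the limit by dominated convergence using $|u_R|\le 1$). To upgrade $C^{2,\alpha}_{loc}$ to $C^{2,\alpha}(\R)$ with a global bound one invokes the translation-invariant a priori estimate, Proposition \ref{prop-regularity-mzrs} applied to $L_\delta$ in place of $L$ (the proof there only used that the operator is a probability-weighted combination of fractional Laplacians with orders bounded below, which $L_\delta$ still is, with $s_*$ unchanged). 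It remains to check the boundary limits $\lim_{x\to\pm\infty} u(x)=\pm 1$: since $u$ is bounded, odd, monotone, it has limits $\ell_\pm = \pm\ell$ with $\ell\in(0,1]$; one rules out $\ell<1$ by a standard argument — the shifts $u(\cdot + t)$ converge as $t\to+\infty$ in $C^2_{loc}$ to a bounded monotone entire solution $u_\infty$ with constant value... actually a solution of $L_\delta v + W'(v)=0$ whose range lies in $[\text{something},\ell]$; testing the equation or using that $W'(\ell)\neq 0$ when $0<\ell<1$ (as $W>0$ off $\pm1$ forces $W'$ not to vanish identically near such $\ell$ — more carefully, one shows $\ell$ must be a critical point of $W$, hence $\ell=1$ by the assumption that the wells are exactly at $\pm1$, or uses the energy: if $\ell<1$ then $\int W(u)=\infty$, contradicting that the minimizers have energy bounded by that of a fixed competitor interpolating between $\pm1$). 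Finally, minimality of $u$ with respect to compactly supported perturbations $\xi$ with $|\xi|\le1$ follows because each $u_R$ is a minimizer on $(-R,R)$ and the energy is lower semicontinuous: for any such $\xi$ supported in $(-R_0,R_0)$, $\mathcal E_\delta(u, (-R_0,R_0)) \le \liminf_R \mathcal E_\delta(u_R,(-R_0,R_0)) \le \liminf_R \mathcal E_\delta(u_R + \xi,(-R_0,R_0)) = \mathcal E_\delta(u+\xi,(-R_0,R_0))$ by $C^2_{loc}$ convergence of $u_R$ and the explicit continuity of each term in $\xi$.

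The main obstacle I expect is the passage to the limit together with establishing the correct limits at $\pm\infty$ and the monotonicity — i.e., ensuring the limit does not collapse to a constant or to a solution with limits strictly inside $(-1,1)$. This is handled by the uniform energy bound obtained by comparison with a fixed competitor (an explicit smooth monotone function equal to $\pm1$ outside a fixed interval), which forces $\int_\R W(u)<\infty$ and hence $u(\pm\infty)=\pm1$; the uniform interior $C^{2,\alpha}$ estimate, which is available precisely because $L_\delta$ is a nondegenerate perturbation of $-\Delta$, takes care of compactness.
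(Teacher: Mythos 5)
Your overall scheme (minimize on $(-R,R)$ with exterior data $\pm 1$, uniform interior estimates exploiting the $\delta\Delta$ term, limit $R\to\infty$, minimality passes to the limit, and the limits $\pm1$ via an energy comparison with a competitor interpolating between $\pm1$) is the same as the paper's. But two steps, as you wrote them, have genuine gaps. First, the oddness argument: you propose to produce an odd minimizer ``after possibly averaging, using strict convexity of the quadratic part.'' Averaging two minimizers is of no use here because $W$ is a double-well, hence non-convex: $\int W\bigl(\tfrac{u+v}{2}\bigr)$ can exceed $\tfrac12\int W(u)+\tfrac12\int W(v)$, so the average of $u_R$ and $-u_R(-\cdot)$ need not be a minimizer, and strict convexity of $\mathcal K$ plus $\delta\int|u'|^2$ does not rescue this. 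The paper instead proves both monotonicity and oddness of $u_R$ by the sliding method, comparing $u_R$ with $u_R(\cdot+t)$ and with $-u_R(-(\cdot+t))$ (the latter solves the same equation because $W$ is even); since you use oddness in the final step (to write the limits as $\pm\ell$), this gap propagates.

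Second, the uniform interior $C^{2,\alpha}$ estimate for $u_R$. You treat $(1-\delta)Lu_R+W'(u_R)$ as a bounded (then H\"older) right-hand side for $-\delta\Delta$, and invoke a bootstrap ``parallel to Proposition \ref{prop-regularity-mzrs}.'' This is circular when ${\rm supp}\,\mu$ accumulates at $s=1$ (which is allowed: $\mu(\{1\})=0$ does not prevent it): to make sense of, let alone bound, $(-\Delta)^s u_R$ for $s$ close to $1$ one needs essentially two derivatives of $u_R$, which is what you are trying to prove; and $u_R$ being H\"older does not make $(-\Delta)^su_R$ H\"older. Moreover Proposition \ref{prop-regularity-mzrs} requires the equation in all of $\R$, so it cannot be applied to $u_R$, which solves the equation only in $(-R,R)$ — this is exactly the bounded-domain difficulty the paper flags in the introduction. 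The paper's fix is quantitative: rescale to $w=\bar v(x_0-r\,\cdot)$ with $r\le r_0$ small, so the nonlocal terms appear with the factor $r^{2-2s}$, use $\|(-\Delta)^s w\|_{C^{0,1/2}([-1,1])}\le C(1+[w]_{C^{2,1/2}([-2,2])})$, absorb the top-order term thanks to $\littint r_0^{2-2s}d\mu(s)$ being small, and conclude with the adimensional interpolation inequality. Your identification of the $\delta\Delta$ term as the saving structure is correct, but without this absorption step the estimate is not established. (Minor further points: with the constraint $|u|\le1$ you must note, as the paper does via the window $|v|\le2$ and truncation, that the constraint is inactive so the Euler--Lagrange equation holds; and in the last step the competitor must agree with $u$ outside a compact set — take $\max\{u,\psi\}$ as in the paper — and its energy on $(-R,R)$ is $O(R^{1-2s_*})$, not $O(1)$, which still contradicts the lower bound $cR$.)
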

\begin{proof}
The proof of the lemma exploits the fact that $L_\delta$ is the minus Laplacian plus lower order terms. The existence of solution will follow using
a rather standard approach ---see for instance \cite{C-SM, PSV} for related proofs. We divide the proof in four steps.

{\em Step 1.} For any given $R>0$, we prove the existence of a strong  solution $u_R$ to
\begin{equation}\label{problemaR}
\begin{cases}
L_\delta u_R + W'(u_R) = 0 \quad &\mbox{in }(-R,R)\\
u_R = -1  &\mbox{in }(-\infty,-R)\\
u_R = 1  &\mbox{in }(R,+\infty).
\end{cases}
\end{equation}

Let $\varphi$ be defined as
\[
\varphi(x)=
\begin{cases}
x/R \quad &\mbox{in }(-R,R)\\
 -1  &\mbox{in }(-\infty,-R)\\
 1  &\mbox{in }(R,+\infty).
\end{cases}
\]
Let $\bar v$ be a minimizer of the energy functional $\mathcal E_\delta$ among all functions $v$ in
\[ X = \{ v\in \varphi + W^{1,2}_0\bigl((-R,R)\bigr) \,:\  |v|\le 2\}.\]
Note that here the functions $v$ are defined in all of $\R$ and $v \equiv 1$ in $[R,\infty)$, $v \equiv -1$ in $(-\infty,R]$.

The existence of $\bar v$ is proved by the ``direct method in the calculus of variations'', using that
\[\frac 1 2 \int_{\R} | v'|^2\,dx = \frac 1 2 \int_{-R}^R |\nabla v|^2\,dx \le \delta^{-1} \mathcal E_\delta(v)\]
and that for $x_1<x_2$
\[  |v(x_2)-v(x_1)|\le \int_{x_1}^{x_2} |v'|\,dx \le \left(\int_\R |v'|^2\,dx\right)^{1/2} |x_2-x_1|^{1/2}, \]
to obtain the compactness of a minimizing sequence. The lower semicontinuity of the $(1-\delta)\mathcal K$ term of $\mathcal E_\delta$ is by now standard and follows easily from Fatou's lemma. The lower semicontinuity of two remaining terms of $\mathcal E_\delta$ is classical.

We  observe that $\bar v\le 1$ in $\R$ since otherwise the function $\min\{\bar v,1\}$ would have strictly less energy than $\bar v$. Similarly, $\bar v \ge -1$. Since in the definition of the minimization space $X$ we constrain $|v|\le 2$, and we have shown that $|\bar v| \le 1$, the minimizer $\bar v$ is a weak solution to
\eqref{problemaR}.

Since $L_\delta$ is the minus Laplacian plus lower order nonlocal terms whose kernels have smooth tails, we show next that $\bar v\in C^{2,1/2}_{\rm loc}\bigl((-R,R)\bigr)$ and hence $\bar v$ is a strong solution to \eqref{problemaR}. Indeed, we have $\bar v\in C^{1/2}(\R)$ simply from the embedding $W^{1,2}(\R) \subset C^{1/2}(\R)$. Then, the function $\bar v$ is a distributional solution to
\[  -\delta \Delta \bar v = -W'(\bar v) - (1-\delta)L\bar v \qquad\text{in } (-R,R).\]

Let $r_0\in (0,1)$ be a small constant to be chosen later. Given $x_0\in (-R,R)$ and $r\in (0,r_0)$ such that
$(x_0-4r, x_0+4r)\subset (-R,R)$, the rescaled function $w = \bar v(x_0- r\,\cdot\,)$ satisfies
\[  -\delta w'' = -\delta \Delta w = -  r^2W'(w) -  (1-\delta)\littint_{[s_*,1)} r^{2-2s}(-\Delta)^s  w\,d\mu(s) \quad
\mbox{ in }(-4,4).\]
Thus, using that $|w|\le 1$ and thus $|W'(w)|\le C$ and,  integrating the ODE two times we obtain
\begin{equation}\label{newnew}
[w]_{C^{2,1/2}([-1,1])} \le  \frac 1\delta \left( C + \int_{[s_*,1)} r^{2-2s}\|(-\Delta)^s
w\|_{C^{0,1/2}([-1,1])}\,d\mu(s)\right)
\end{equation}
where $C$ depends only on $W$.
But by a standard estimate, using again that $|w|\le 1$ in all of $\R$,
\[ \|(-\Delta)^s w\|_{C^{0,1/2}([-1,1])} \le   C (1+  [ w ]_{C^{2,1/2}([-2,2])}) ,\]
with $C$ depending only on $s_*$.

Hence, \eqref{newnew} yields
\[
[w]_{C^{2,1/2}([-1,1])} \le \frac{C}{\delta} + \frac{C}{\delta}\bigl( \littint_{[s_*,1)} (r_0)^{2-2s}\,d\mu(s)\bigr) [w]_{C^{2,1/2}([-2,2])},
\]
where we have used that $r<r_0$.

Scaling back the previous estimate from $w$ to $\bar v$ we obtain
\[  r^{5/2}[\bar v]_{C^{2,1/2}(x_0-r, x_0+r)} \le C/\delta + \rho(r_0) r^{5/2}[\bar v]_{C^{2,1/2}(x_0-2r, x_0+2r)} \]
for all $x_0\in (-R,R)$ and $r\in (0,r_0)$ such that $(x_0-4r, x_0+4r)\subset(-R,R)$,
where
\[\rho(t) :=\frac{C}{\delta}\bigl( \littint_{[s_*,1)}t ^{2-2s}\,d\mu(s)\bigr)\]
is some modulus of continuity (that is $\rho(t) \searrow 0$ as $t\searrow 0$) depending only on $\delta$ and $\mu$.

Then, it follows from $|\bar v|\le 1$ and the interpolation inequality for adimensional H\"older seminorms  \cite{GT}[Lemma 6.32 in Section 6.8] that
\[ r^{5/2}[\bar v]_{C^{2,1/2}(-R+r,R-r)} \le C,\]
for all $r>0$, where $C$ depends only on $\delta$, $\mu$, and $W$.

We define $u_R := \bar v$.

{\em Step 2.} We next show that $u_R$ is nondecreasing and odd.
This follows using the sliding technique.
Namely, define for $t>0$ the function $\tilde u^t(x) = u_R(x+t)$. When $t$ is large $\tilde u^t$ stays above $u_R$. Since
there can not be  contact points in $(-R,R-t)$ between the two strong solutions of the same equation $u_R$ and $\tilde u^t$,
we conclude that we can keep continuously decreasing $t$, preserving the inequality $\tilde u^t \ge u_R$ in $\R$ until we reach $t=0$. This means that $u_R$ is nondecreasing.
A similar procedure done now with $-u_R \bigl(-(x+t)\bigr)$ instead of $\tilde u^t$, which is also a solution to the same equation since $W$ is even (and hence $W'$ odd),  shows that $u_R$ is odd and in particular $u_R(0)=0$.

{\em Step 3.}  We let $R_k\to +\infty$ and show next that the previous minimizers $u_{R_k}$ in $(-R_k,R_k)$ converge (up to a subsequence) to an odd and nondecreasing solution of
\[
L_\delta u + W'(u) = 0 \quad \mbox{in }\R\\
\]
which minimizes the energy with respect to compactly supported perturbations $\xi\in C^\infty_c(\R)$ with $|\xi|\le 1$.

Indeed, for given  $\alpha'\in (0,1/2)$, the convergence in $C^{2,\alpha'}_{\rm loc}(\R)$  of a subsequence of $u_{R_k}$ to a solution strong solution $u$ in $\R$ is standard and follows from the interior $C^{2,1/2}$ estimates (which are independent of $R$) and the Arzel\`a-Ascoli theorem. The limiting function $u$ is odd and nondecreasing since  $u_{R_k}$ are so.

The fact that $u$ is a minimizer with respect to compactly supported perturbations follows by passing to the limit in
the inequality
\[
\begin{split}
 &\mathcal E_\delta\bigl( u_{R_k}+ \xi , (-R, R)\bigr) - \mathcal E_\delta\bigl( u_{R_k} , (-R, R)\bigr) =\\
& \hspace{20mm}= \mathcal E_\delta\bigl( u_{R_k}+ \xi , (-R_k, R_k)\bigr) - \mathcal E_\delta\bigl( u_{R_k} , (-R_k, R_k)\bigr) \ge 0,
\end{split} \]
which holds for all $\xi\in C^\infty_c\bigl((-R,R)\bigr)$ with $|\xi|\le 1$, and $R_k>R$.

{\em Step 4.} It remains to show that  the solution $u$ build in Step 3  satisfies the limits $\lim_{x\to \pm \infty} u(x) = \pm 1$. Otherwise it could be the trivial solution $u \equiv 0$.
This follows from the minimality property of $u$. Indeed, let $\ell = \lim_{x\to +\infty} u(x)$ --- recall that $u$ is nondecreasing. If it were $\ell<1$ then
we would have
\[\mathcal E_\delta \bigl(u, (-R,R)\bigr) \ge \int_{-R}^R W(u) dx \ge cR\]
for some $c>0$. But then it is easy to build a competitor $w$ with $w= u$ outside $(-R,R)$, $|w-u|\le 1$, and  satisfying
\begin{equation}\label{estimatew}
\mathcal E_\delta \bigl(w, (-R,R)\bigr) \le C R^{1-2s_*},
\end{equation}
which would contradict the minimality of $u$ when taking $R$ large enough.
Indeed, the competitor $w$ is simply defined by $w=\max\{u, \psi\}$ where
\[
\psi(x) =
\begin{cases}
1 \quad &\mbox{if }|x|\le R-2\\
R-1-|x| &\mbox{if }R-2\le |x|\le R\\
-1 & \mbox{if }R\le |x|.
\end{cases}
\]
With this choice of $w$, \eqref{estimatew} is established by straightforward computation using that $|w'|\le C$ in all of $\R$ ---since $|\psi'| \le 1$ and $|u'|\le C$ by the previous estimates for $u_R$ in this same proof---,
the $n=1$ case of Claim \ref{claim:energy} in next section, that $w\equiv 1$ in $B_{R-2}$, and that $B_R\setminus B_{R-2}$ has length 4.
\end{proof}

We obtain the following
\begin{cor}
Given $W$ even, i.e. $W(-t)=W(t)$, there exists a unique bounded  solution $L u + W'(u)=0$ in $\R$  satisfying $|u|\le 1$ with $u(0)=0$ and $\lim_{x\to \pm \infty} u(x) = \pm 1$. In particular $u$ is odd. Moreover, $u\in C^{2,\alpha}(\R)$ and it is a minimizer of $\mathcal E$ with respect to smooth compactly supported perturbations $\xi$ with $|\xi|\le 1$.
\end{cor}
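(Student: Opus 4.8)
The plan is to obtain $u$ as a limit, as $\delta\downarrow0$, of the solutions $u_\delta$ of $L_\delta u_\delta+W'(u_\delta)=0$ in $\R$ provided by Proposition~\ref{propdelta}, where $L_\delta$ is given by \eqref{Ldelta}; recall that each $u_\delta$ is odd, nondecreasing, satisfies $|u_\delta|\le1$ and $u_\delta(0)=0$, has the limits $\pm1$ at $\pm\infty$, and is a minimizer of $\cE_\delta$ with respect to smooth compactly supported perturbations $\xi$ with $|\xi|\le1$. The key observation is that the regularity estimate of Proposition~\ref{prop-regularity-mzrs} holds for $L_\delta$ \emph{with constants independent of $\delta$}. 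Indeed $L_\delta$ has the form $\int_{[s_*,1]}(-\Delta)^s\,d\mu_\delta(s)$, where $\mu_\delta$ is the probability measure placing mass $\delta$ at $s=1$ and the remaining mass $(1-\delta)$ as $\mu$; this measure is again supported in $[s_*,1]$, so the proof of Lemma~\ref{lem-reg} ---which only uses that one integrates against a probability measure on $[s_*,1]$, and that after applying $(-\Delta)^{s_*/2}$ one is left with a convex combination of fractional Laplacians of orders in $[s_*/2,1-s_*/2]$, to which the results of \cite{Sil2} apply--- applies to $L_\delta$ as well. Iterating it as in the proof of Proposition~\ref{prop-regularity-mzrs} then yields $\|u_\delta\|_{C^{2,\gamma}(\R)}\le C$ with $C$ and $\gamma>0$ depending only on $n$, $s_*$ and $W$, uniformly in $\delta$.

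Granting this uniform bound, I would extract by Arzel\`a--Ascoli a sequence $\delta_k\downarrow0$ along which $u_{\delta_k}\to u$ in $C^2_{\rm loc}(\R)$; the limit $u$ then lies in $C^{2,\gamma}(\R)$, satisfies $|u|\le1$ and $u(0)=0$, and is odd and nondecreasing. That $u$ solves $Lu+W'(u)=0$ classically is seen by letting $k\to\infty$ in $L_{\delta_k}u_{\delta_k}(x)+W'(u_{\delta_k}(x))=0$: the term $\delta_k(-\Delta)u_{\delta_k}(x)$ tends to $0$ by the uniform $C^2$ bound, while $\int_{[s_*,1]}(-\Delta)^su_{\delta_k}(x)\,d\mu(s)\to\int_{[s_*,1]}(-\Delta)^su(x)\,d\mu(s)=Lu(x)$ by dominated convergence ---for each $s$ one has $(-\Delta)^su_{\delta_k}(x)\to(-\Delta)^su(x)$, splitting the singular integral and using $C^2_{\rm loc}$ convergence near $x$ and $|u_{\delta_k}|\le1$ away from $x$, and the integrand is bounded uniformly in $k$ and $s\in[s_*,1)$ by \eqref{cotinuousoperator:C2gammatoCgamma}. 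Next I would transfer minimality: passing to the limit in $\cE_{\delta_k}(u_{\delta_k}+\xi,(-R_k,R_k))\ge\cE_{\delta_k}(u_{\delta_k},(-R_k,R_k))$ for $R_k>R$ and fixed $\xi\in C^\infty_c((-R,R))$ with $|\xi|\le1$, one finds $\cE(u+\xi,(-R,R))\ge\cE(u,(-R,R))$ for all such $R$ and $\xi$. Here the $\tfrac{\delta_k}{2}\int|\nabla\cdot\,|^2$ part of the difference equals $\tfrac{\delta_k}{2}\int|\nabla\xi|^2+\delta_k\int\nabla u_{\delta_k}\cdot\nabla\xi$, which tends to $0$ by the uniform $C^1$ bound; the $(1-\delta_k)\cK$ part equals $(1-\delta_k)\bigl(\int_{\R^n} u_{\delta_k}L\xi+\tfrac12\langle\xi,\xi\rangle_{(-R,R)}\bigr)$ by \eqref{int_by_parts_formula} and the identity $\langle v,\xi\rangle_{\Omega}=\int_{\R^n} vL\xi$ for $\xi\in C^\infty_c(\Omega)$, and it converges to $\int_{\R^n} uL\xi+\tfrac12\langle\xi,\xi\rangle_{(-R,R)}$ by dominated convergence (the tail of $L\xi$ is an integrable $O(|x|^{-1-2s_*})$ and $|u_{\delta_k}|\le1$); and the $\int W$ part converges by dominated convergence.

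Finally I would rule out that the layer profile degenerates in the limit, i.e.\ prove $\lim_{x\to\pm\infty}u(x)=\pm1$; this is the one genuinely delicate step, since a priori $u$ could be the trivial solution $u\equiv0$. The argument is exactly that of Step~4 of the proof of Proposition~\ref{propdelta}, now using the minimality of $u$ with respect to $\cE$ established above. Let $\ell=\lim_{x\to+\infty}u(x)$, which exists and lies in $[0,1]$ since $u$ is nondecreasing with $u(0)=0$. If $\ell<1$, then by oddness $|u|\le\ell$ everywhere, so $W(u)\ge c_0:=\min_{[-\ell,\ell]}W>0$ and $\cE(u,(-R,R))\ge2c_0R$; on the other hand, comparing $u$ on $(-R,R)$ with a layer-shaped Lipschitz competitor $w$ ---equal to $u$ outside $(-R,R)$, equal to $+1$ on $[2,R-2]$ and to $-1$ on $[-(R-2),-2]$, equal to $u$ on $[-1,1]$, linearly interpolated in between, and (one checks) satisfying $|w-u|\le1$--- one gets from the $n=1$ case of Claim~\ref{claim:energy} that $\cE(w,(-R,R))\le C\,\Phi_{1,s_*}(R)=o(R)$, contradicting the minimality of $u$ once $R$ is large. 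Hence $\ell=1$ and, by oddness, $\lim_{x\to\pm\infty}u(x)=\pm1$. This produces a solution with all the stated properties (it is $C^{2,\gamma}$, odd, and a minimizer of $\cE$ for perturbations with $|\xi|\le1$); uniqueness follows from the sliding method as indicated in the text, whose details we omit. The expected main obstacle is precisely this last non-degeneracy step: both the passage to the limit and the transfer of minimality are what allow the competitor argument of Proposition~\ref{propdelta} to be run for $L$ itself, and both rest on the uniform-in-$\delta$ estimate of the first paragraph.
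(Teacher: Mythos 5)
Your proposal is correct and follows essentially the same route as the paper's own proof: the uniform-in-$\delta$ $C^{2,\gamma}$ bound for $u_\delta$ via the factorization trick applied to $L_\delta$, compactness and passage to the limit in both the equation and the minimality inequality, and the Step~4 energy-comparison argument (with Claim \ref{claim:energy} for $n=1$) to exclude the degenerate limit, with uniqueness deferred to the sliding method exactly as the paper does.
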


\begin{proof}
We will build the solution $u$ by considering the solution $u_\delta$ to  $L_\delta u_\delta + W'(u_\delta)=0$ in $\R$ from Proposition \ref{propdelta} and sending $\delta\to 0$.

The crucial observation that makes possible this approach is that the ``a priori'' estimate of Proposition \ref{prop-regularity-mzrs}, that has been obtained through the factorization trick, is also true (with identical proof) for the modified operator $L_\delta$.
Indeed, the factorization trick exploits the fact that $L= \tilde L \circ (-\Delta)^\epsilon $ where $\tilde L$ is an operator of the form  $\int_{[\epsilon, 1-\epsilon]} (-\Delta)^t \,\tilde \mu(dt)$ to which Silvestre's $C^\alpha$ estimate applies because it remains nonlocal at every small scale.
It is clear that the same type of factorization holds for $L_\delta$ and hence the proofs of section \ref{secreg} apply to $u_\delta$ without any change to yield
\[  \|u_\delta\|_{C^{2,\gamma} (\R)} \le C \]
with $C$ depending only on $s_*$ and $W$ (but not on $\delta$).

Therefore, letting $\delta \to 0$ the functions $u_\delta$ converge (up to a subsequence) in $C^{2+\gamma'}_{\rm loc} (\R)$ for all $\gamma'\in(0,\gamma)$, to a monotone odd solution to
$L u + W'(u)=0$ in $\R$.  
Moreover, similarly as in the proof of Proposition \ref{propdelta}, we can pass to the limit in the inequality
\[ \mathcal E_\delta\bigl( u_{\delta}+ \xi , (-R, R)\bigr) \ge \mathcal E_\delta\bigl( u_{\delta} , (-R, R)\bigr) \]
whenever $R>0$ and $\xi\in C^\infty_c\bigl((-R,R)\bigr)$ with $|\xi|\le 1$ to obtain an analog minimality property for $u$. Then, with the same energy comparison strategy as in the proof of Proposition \ref{propdelta} we rule out the possibility $\lim_{x\to +\infty} u(x) <1 $ and thus $\lim_{x\to \pm \infty} u(x) = \pm 1$.
\end{proof}

\section{Energy estimates}\label{sec4}

In this section we establish the energy estimate of Proposition \ref{thm:eg-est} for layer solutions of \eqref{problema1}.

Next Claim will be used to prove the energy estimates. Recall the definition of $\Phi_{n,s}(R)$ from \eqref{eq_phins}. The proof of the claim 
is a simple calculation and it is given at the end of the section.
\begin{claim} \label{claim:energy}For every $R\ge 2$, we have
\[c_n(s)\int_{B_{R}} \int_{\cC B_{R} } \frac{\min\{1,|x-y|\}}{|x-y|^{n+2s}}\,dx \,dy\le C\Phi_{n,s}(R)\,\]
where $C$ depends only on $n$ (but not on $s$).
\end{claim}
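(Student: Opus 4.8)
The plan is to estimate the double integral directly by splitting it according to the distance $|x-y|$, and to treat the near-boundary region and the far region separately. First I would fix $R\ge 2$ and observe that, by the presence of $\min\{1,|x-y|\}$ in the numerator, we may write the integral as $I = I_{\mathrm{near}} + I_{\mathrm{far}}$, where $I_{\mathrm{near}}$ is the part of the region with $|x-y|\le 1$ (so the numerator is $|x-y|$, and the integrand is $|x-y|^{-(n+2s-1)}$) and $I_{\mathrm{far}}$ is the part with $|x-y|\ge 1$ (so the numerator is $1$, and the integrand is $|x-y|^{-(n+2s)}$). Since $x\in B_R$ and $y\in \cC B_R$, any point $x$ contributing to $I_{\mathrm{near}}$ must lie within distance $1$ of $\partial B_R$, i.e.\ in the shell $A_R := B_R\setminus B_{R-1}$, which has measure $\le C R^{n-1}$.

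For $I_{\mathrm{near}}$, I would fix $x\in A_R$ and integrate in $y$ over $\{|x-y|\le 1\}$ (dropping the constraint $y\notin B_R$, which only enlarges the region). The bound $\int_{\{|z|\le 1\}} |z|^{-(n+2s-1)}\,dz$ converges precisely because $n+2s-1 < n$, and equals $C\, (1-(1-2s))^{-1}\cdot\ldots$; more carefully, $\int_{\{|z|\le 1\}}|z|^{1-n-2s}\,dz = \omega_{n-1}\int_0^1 r^{-2s}\,dr = \omega_{n-1}(1-2s)^{-1}$ when $s\ne 1/2$, and diverges logarithmically — wait, at $r=0$ it is $r^{-2s}$ which is integrable near $0$ since $2s<2$, and the value is finite for all $s\in(0,1)$; actually $\int_0^1 r^{-2s}dr = (1-2s)^{-1}$ for $s<1/2$ and is still finite (equal to a bounded constant) for $s\ge 1/2$ since the only genuine singularity issue would be at large $r$, which is cut off at $1$. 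So $I_{\mathrm{near}} \le C R^{n-1}\cdot C(s)$ with $C(s)$ bounded uniformly for $s$ bounded away from... no: $\int_0^1 r^{-2s}\,dr$ is bounded by a universal constant for all $s\in(0,1)$ since $2s<2$. Hence $I_{\mathrm{near}}\le C R^{n-1}$, and after multiplying by $c_n(s)$ — which by \eqref{eq_c_n(s)} satisfies $c_n(s)\le C(n)$ uniformly in $s\in(0,1]$ (note the factor $s(1-s)$ keeps it bounded) — we get $c_n(s)I_{\mathrm{near}}\le C(n)R^{n-1}\le C(n)\Phi_{n,s_*}(R)$... but the claim is stated with $\Phi_{n,s}$, not $\Phi_{n,s_*}$. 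Since $R^{n-1}\le \Phi_{n,s}(R)$ for all $s\le 1/2$ and $R\ge 2$, and for $s>1/2$ one has $\Phi_{n,s}(R)=R^{n-1}(R^{1-2s}-1)(1-2s)^{-1}$ which is also $\ge c R^{n-1}$ for $R\ge 2$ (as a short computation shows, since $(R^{1-2s}-1)(1-2s)^{-1}=\int_1^R t^{-2s}\,dt\ge \int_1^2 t^{-2s}\,dt\ge c$), we conclude $c_n(s)I_{\mathrm{near}}\le C\Phi_{n,s}(R)$ in all cases.

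For $I_{\mathrm{far}}$, I would fix $y\in\cC B_R$ and integrate in $x\in B_R$ over $\{|x-y|\ge 1\}$, then integrate in $y$; alternatively, and more cleanly, fix $x\in B_R$, let $\rho = \mathrm{dist}(x,\cC B_R)\wedge$ nothing — actually fix $x\in B_R$ and integrate $\int_{\{|z|\ge \max(1,\delta(x))\}} |z|^{-(n+2s)}\,dz$ where $\delta(x)=R-|x|$ is the distance from $x$ to the boundary; this equals $C(n)s^{-1}\max(1,\delta(x))^{-2s}$. Integrating in $x$ over $B_R$: the contribution of the shell $A_R$ (where $\delta(x)\le 1$) is $\le C s^{-1} |A_R|\le Cs^{-1}R^{n-1}\le CR^{n-1}$ after multiplication by $c_n(s)\sim c(n)s(1-s)$ kills the $s^{-1}$; and the contribution of $B_{R-1}$ is $\le C s^{-1}\int_{B_{R-1}}\delta(x)^{-2s}\,dx \le C s^{-1}\int_0^{R}\!\! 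R^{n-1}\,\delta^{-2s}\,d\delta$ roughly $= C s^{-1}R^{n-1}(R^{1-2s})(1-2s)^{-1}$ when $s\ne 1/2$, and logarithmic when $s=1/2$ — this is exactly $C s^{-1}(1-s)^{-1}\cdot s(1-s)^{-1}$... I would keep careful track, but the upshot is $c_n(s) I_{\mathrm{far}}\le C(n)\,\Phi_{n,s}(R)$, matching the definition \eqref{eq_phins} including the logarithmic borderline case $s=1/2$. Summing the two estimates finishes the proof, with $C$ depending only on $n$.

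The main obstacle — really the only subtle point — is bookkeeping the dependence on $s$ so that the final constant is genuinely $s$-independent: one must use the explicit form \eqref{eq_c_n(s)} to see that $c_n(s)$ is bounded above by $C(n)$ uniformly in $s\in(0,1]$ and that the factor $s(1-s)$ in $c_n(s)$ exactly cancels the $s^{-1}$ and (near $s=1$) the $(1-s)^{-1}$ blow-ups coming from $\int_0^1 r^{-2s}dr$ and from integrating $\delta(x)^{-2s}$; and one must verify the $s=1/2$ case produces the logarithm, which is automatic since $(R^{1-2s}-1)(1-2s)^{-1}\to \log R$ as $s\to 1/2$. I expect no genuine difficulty beyond this careful accounting.
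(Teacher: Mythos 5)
Your near/far split according to $|x-y|\le 1$ versus $|x-y|\ge 1$ is a legitimate alternative to the paper's decomposition (the paper splits instead by position: $x\in B_{R-1}$, $y\in\cC B_{R+1}$, and the corner region $(B_R\setminus B_{R-1})\times(B_{R+1}\setminus B_R)$), and your treatment of $I_{\mathrm{far}}$ essentially reproduces the paper's estimate of its first two terms. The genuine gap is in $I_{\mathrm{near}}$, which is the only delicate part. After fixing $x$ in the shell $B_R\setminus B_{R-1}$ you drop the constraint $y\in\cC B_R$ and bound the inner integral by $\int_{\{|z|\le 1\}}|z|^{1-n-2s}\,dz=C(n)\int_0^1 r^{-2s}\,dr$, asserting this is bounded uniformly in $s\in(0,1)$ ``since $2s<2$''. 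That is false: the convergence condition for $\int_0^1 r^{-2s}\,dr$ is $2s<1$, and the integral is $+\infty$ for every $s\ge 1/2$. So your bound $I_{\mathrm{near}}\le CR^{n-1}$ is unsupported precisely in the range $s\in[1/2,1)$, which is the range needed for the $n=3$ case of Theorem \ref{thm:1dsym}; no constant-chasing fixes a one-variable integral that diverges. (Your closing remark that the $s(1-s)$ in $c_n(s)$ cancels an alleged $(1-s)^{-1}$ blow-up of $\int_0^1 r^{-2s}\,dr$ suffers from the same confusion.)

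The missing point is that for $s\ge 1/2$ the finiteness of the near part is a genuinely two-variable effect: you must keep the constraint $y\in\cC B_R$, which forces $|x-y|\ge R-|x|=:\delta$, and then also use the integration in $x$. Concretely, $\int_{\{y\in\cC B_R,\,|x-y|\le 1\}}|x-y|^{1-n-2s}\,dy\le C(n)\int_\delta^1 r^{-2s}\,dr$, and by Fubini $\int_0^1 d\delta\int_\delta^1 r^{-2s}\,dr=\int_0^1 r^{1-2s}\,dr=(2-2s)^{-1}$, so $I_{\mathrm{near}}\le C(n)R^{n-1}(1-s)^{-1}$; only now does the factor $(1-s)$ in $c_n(s)$ cancel the blow-up, giving $c_n(s)I_{\mathrm{near}}\le C(n)R^{n-1}\le C\Phi_{n,s}(R)$. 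This is exactly the mechanism in the paper's third (corner) term $\int_{B_R\setminus B_{R-1}}\int_{B_{R+1}\setminus B_R}|x-y|^{1-n-2s}\,dx\,dy$: the paper reduces it to the one-dimensional double integral $\int_{-1}^{0}\int_0^1|x-y|^{-2s}\,dx\,dy\le C(1-s)^{-1}$ (finite for all $s\in(0,1)$ because it is a double integral across the interface) and obtains $CR^{n-1}/(s(1-s))$ before multiplying by $c_n(s)$. With your near-term estimate corrected along these lines, your route does go through.
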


The following proposition establishes the energy estimate for layer solutions in every dimension. Since there is no extra effort in doing it, we prove a slightly more general statement that can be used to show energy estimates  for monotone solutions (without limits) in dimension three, as in Section 6 of \cite{C-Cinti2}.

\begin{prop}\label{prop_3}
Let $u$ be a solution of \eqref{problema1} which is monotone in the $x_n$ direction. Define $\overline u:\R^n\rightarrow \R$ by $\overline{u}(x',x_n) = \overline{u}(x')= \lim_{x_n\to +\infty} u(x',x_n)$. Then, there exists a constant $C$ depending only on $n$, $s_*$, and $W$, such that
\begin{equation}\label{eg_est_montne}
\cE(u,B_R)-\cE(\overline u,B_R)\le C \Phi_{n,s_*}(R)
\end{equation}
for every $R\ge 2$.

\begin{proof}
Consider, as in \cite{AC}, the slided function $u^t$, $t\ge0$, defined by $u^t(x',x_n) = u(x',x_n+t)$.

Using the integration by parts formula \eqref{int_by_parts_formula} and the equation satisfied by $u^t$ we find
\begin{equation}\label{eq:ddtE}
\frac{d}{dt} \cE(u^t,B_R) = \littint d\mu(s) c_n(s)\int_{\cC{B_R}}\negmedspace dx\int_{B_R}\negmedspace dy\frac{u^t(x)-u^t(y)}{|x-y|^{n+2s}} \partial_t u^t(x)\,.
\end{equation}
Indeed, we have
\[\begin{split}
\frac{d}{dt} \cE(u^t,B_R) &= \langle u^t, \partial_t u^t\rangle_{B_R}+ \int_\Omega W'(u) \partial_t u^t \,dx\\
&= \int_{B_R} L u^t \partial_t u^t +\littint d\mu(s)c_n(s)\int_{\cC{B_R}}\negmedspace dx\int_{B_R}\negmedspace dy\frac{u^t(x)-u^t(y)}{|x-y|^{n+2s}}  \partial_t u^t(x)\ +\\
&\qquad\qquad\qquad\qquad\qquad\qquad  +\int_{B_R} W'(u^t) \partial_t u^t (u) \,dx\,,
\end{split}
\]
and note that $L u^t + W'(u^t) \equiv 0$.

Using the bound $\|u^t\|_{C^{2,\gamma}(\R^n)}\le C$ in Proposition \ref{prop-regularity-mzrs} with $C$ depending only on $n$, $s_*$, and $W$ ---thus, $C$ independent of $t$--- we find, by monotone convergence, that $u^t\to \bar u$ in  $C^{2,\gamma}_{\text{loc}}(\R^n)$. We also find that $|u^t(x)-u^t(y)|\le C \min\{1,|x-y|\}$.

Therefore, we have
\[\cE(u,B_R)-\cE(\overline u,B_R) = \bigl.\cE(u^t,B_R)\bigr|_{+\infty}^0 = -\int_0^{+\infty} \frac{d}{dt} \cE(u^t,B_R)\,dt\,.\]
Integrating \eqref{eq:ddtE}, using that $\partial_t u^t = \partial_{x_n}u^t \ge 0$, and Claim \ref{claim:energy}, we obtain
\[\begin{split}
\cE(u,B_R)-\cE(\overline u,B_R)&= -\int_0^{+\infty} \frac{d}{dt} \cE(u^t,B_R)\,dt\\
&= - \int_0^{+\infty}\negmedspace dt \littint d\mu(s) c_n(s)\int_{\cC{B_R}}\negmedspace dx\int_{B_R}\negmedspace dy\frac{u^t(x)-u^t(y)}{|x-y|^{n+2s}} \partial_t u^t(x)\\
&\le \int_0^{+\infty}\negmedspace dt \littint d\mu(s) c_n(s)\int_{\cC{B_R}}\negmedspace dx\int_{B_R}\negmedspace dy\frac{C\min\{1,|x-y|\}}{|x-y|^{n+2s}} \partial_t u^t(x)\\
&= \littint d\mu(s) c_n(s)\int_{\cC{B_R}}\negmedspace dx\int_{B_R}\negmedspace dy\frac{C\min\{1,|x-y|\}}{|x-y|^{n+2s}} \int_0^{\infty}\partial_t u^t(x)\,dt\\
&\le C \|u\|_{L^{\infty}(\R^n)} \littint d\mu(s) c_n(s)\int_{\cC{B_R}}\negmedspace dx\int_{B_R}\negmedspace dy\frac{\min\{1,|x-y|\}}{|x-y|^{n+2s}}\\
&\le C  \littint d\mu(s)\Phi_{n,s}(R)\\
&\le C  \Phi_{n,s_*}(R)\,,
\end{split}
\]
for some $C$ depending only on $n$, $s_*$, and $W$. We have also used the fact that $\Phi_{n,s}$ is decreasing in $s$ for $R\ge 2$.
\end{proof}
\end{prop}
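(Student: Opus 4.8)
The plan is to prove Proposition \ref{prop_3} by the sliding method of Ambrosio--Cabr\'e \cite{AC}, exploiting that the translates of a monotone solution are again solutions and that the flux term produced by integration by parts has a sign. For $t\ge 0$ set $u^t(x',x_n)=u(x',x_n+t)$. Then $u^t$ is again a bounded solution of \eqref{problema1}, it is monotone in $x_n$ with $\partial_t u^t=\partial_{x_n}u^t\ge 0$, and $u^t\to\overline u$ pointwise (monotonically) as $t\to+\infty$. By Proposition \ref{prop-regularity-mzrs} one has the bound $\|u^t\|_{C^{2,\gamma}(\R^n)}\le C$ uniform in $t$ (with $C$ depending only on $n$, $s_*$, $W$), from which two facts follow that will be used repeatedly: first, $\cE(u^t,B_R)<\infty$, since the kinetic integrand $(u^t(x)-u^t(y))^2|x-y|^{-n-2s}$ is controlled by $C|x-y|^{2-n-2s}$ near the diagonal and by $C|x-y|^{-n-2s}$ for large separations; second, $|u^t(x)-u^t(y)|\le C\min\{1,|x-y|\}$ uniformly in $t$.

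The first step is to compute $\frac{d}{dt}\cE(u^t,B_R)$. Differentiating, $\frac{d}{dt}\cK(u^t,B_R)=\langle u^t,\partial_t u^t\rangle_{B_R}$ and $\frac{d}{dt}\int_{B_R}W(u^t)\,dx=\int_{B_R}W'(u^t)\,\partial_t u^t\,dx$; the uniform $C^{2,\gamma}$ bound legitimates differentiating under the integral signs. Applying the integration by parts formula \eqref{int_by_parts_formula} with the bounded $C^{1,\gamma}$ function $v=\partial_t u^t$, the scalar product $\langle u^t,\partial_t u^t\rangle_{B_R}$ splits into the bulk term $\int_{B_R}Lu^t\,\partial_t u^t\,dx$ and the nonlocal flux term across $\partial B_R$. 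The bulk terms combine to $\int_{B_R}\big(Lu^t+W'(u^t)\big)\partial_t u^t\,dx=0$ by the equation, leaving
\[
\frac{d}{dt}\cE(u^t,B_R)=\littint d\mu(s)\,c_n(s)\int_{\cC B_R}\!dx\int_{B_R}\!dy\,\frac{u^t(x)-u^t(y)}{|x-y|^{n+2s}}\,\partial_t u^t(x).
\]

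The second step is to integrate in $t$ from $0$ to $+\infty$. Since $\cE(u^t,B_R)\to\cE(\overline u,B_R)$ as $t\to+\infty$ (by the uniform regularity bound and dominated convergence), one gets $\cE(u,B_R)-\cE(\overline u,B_R)=-\int_0^{+\infty}\frac{d}{dt}\cE(u^t,B_R)\,dt$. Now comes the point where monotonicity is used: because $\partial_t u^t(x)\ge 0$ and $|u^t(x)-u^t(y)|\le C\min\{1,|x-y|\}$, the integrand is bounded in absolute value by $C\min\{1,|x-y|\}|x-y|^{-n-2s}\partial_t u^t(x)$, and by Tonelli one may carry out the $t$-integration first, using $\int_0^{+\infty}\partial_t u^t(x)\,dt=\overline u(x)-u(x)\le 2\|u\|_{L^\infty(\R^n)}$. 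This yields
\[
\cE(u,B_R)-\cE(\overline u,B_R)\le C\littint d\mu(s)\,c_n(s)\int_{\cC B_R}\!dx\int_{B_R}\!dy\,\frac{\min\{1,|x-y|\}}{|x-y|^{n+2s}}.
\]
The third and final step is to invoke Claim \ref{claim:energy}, which bounds the inner double integral by $C\Phi_{n,s}(R)$ with $C$ independent of $s$; integrating against the probability measure $\mu$ and using that $\Phi_{n,s}(R)$ is decreasing in $s$ for $R\ge 2$ while $\mu$ is supported in $[s_*,1]$ gives $\cE(u,B_R)-\cE(\overline u,B_R)\le C\Phi_{n,s_*}(R)$, as desired.

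I expect the main obstacle to be the analytic bookkeeping rather than the structure: one must justify finiteness of $\cE(u^t,B_R)$, the differentiation under the integral sign, the validity of \eqref{int_by_parts_formula} for the (non-compactly-supported) test function $\partial_t u^t$, the passage to the limit $t\to+\infty$, and the Tonelli interchange --- all of which rely crucially on the $t$-uniform $C^{2,\gamma}$ estimate of Proposition \ref{prop-regularity-mzrs}. The genuinely structural ingredient, making the whole scheme collapse to the kernel bound of Claim \ref{claim:energy}, is the sign $\partial_t u^t=\partial_{x_n}u^t\ge 0$, which both fixes the sign of the flux term and allows the $t$-integral to telescope.
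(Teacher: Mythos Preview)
Your proposal is correct and follows essentially the same approach as the paper: slide $u$ in the $x_n$-direction, differentiate $\cE(u^t,B_R)$, use the equation together with the integration by parts formula \eqref{int_by_parts_formula} so that only the nonlocal flux term survives, then integrate in $t$ exploiting $\partial_t u^t\ge0$ and the uniform $C^{2,\gamma}$ bound, and finish with Claim \ref{claim:energy} and the monotonicity of $\Phi_{n,s}(R)$ in $s$. The analytic justifications you flag (differentiation under the integral, the limit $t\to+\infty$, the Tonelli interchange) are exactly the ones the paper handles via the uniform regularity estimate of Proposition \ref{prop-regularity-mzrs}.
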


We give the
\begin{proof}[Proof of Proposition \ref{thm:eg-est}]
It is an immediate consequence of Proposition \ref{prop_3}, observing that, for layer solutions, we have $\overline u\equiv 1$  and clearly $\cE(1,B_R)=0$ for all $R>0$.
\end{proof}

We finally give the
\begin{proof}[Proof of Claim \ref{claim:energy}]
Observe that
\[\begin{split}\int_{B_{R}} \int_{\cC B_{R} } \frac{\min\{1,|x-y|\}}{|x-y|^{n+2s}}\,dx \,dy &\le
\int_{B_{R-1}} \int_{\cC B_{R} } \frac{\,dx \,dy}{|x-y|^{n+2s}} +\\
+\int_{B_{R}} \int_{\cC B_{R+1} } &\frac{\,dx \,dy}{|x-y|^{n+2s}}
+\int_{B_R\setminus B_{R-1}}\int_{B_{R+1\setminus B_R} } \frac{\,dx \,dy}{|x-y|^{n+2s-1}}\,.
\end{split}\]

The first term is bounded as follows: for $x\in B_{R-1}$ we have
\[\phi(x) := \int_{\cC B_{R} } \frac{\,dy}{|x-y|^{n+2s}} \le  \int_{R-|x|}^\infty \frac{r^{n-1}dr}{r^{n+2s}} = \frac{1}{2s} (R-|x|)^{-2s}\,.\]
Therefore,
\[\begin{split}
\int_{B_{R-1}}\int_{\cC B_{R} } \frac{\,dx \,dy}{|x-y|^{n+2s}} &= \int_{B_{R-1}} \phi(x)dx \le
 \frac{C}{s}\int_0^{R-1} \frac{r^{n-1}dr}{(R-r)^{2s}}\\
 &\le  \frac{CR^{n-1}}{s}\int_0^{R-1} \frac{dr}{(R-r)^{2s}} =\frac{C}{s}\Phi_{n,s}(R)\,,
\end{split}
\]
where $C$ denotes different constants throughout the proof which depend only on $n$.

The second term is identical, having $R+1$ instead of $R$. Thus, it is also bounded by $\frac{C}{s}\Phi_{n,s}(R)$.

The third term is easily bounded in dimension $n=1$ and we will use this later for general $n$. Indeed, if $s\neq1/2$,
\[\begin{split}
\int_{R-1}^R\int_{R}^{R+1}\frac{dx\,dy}{|x-y|^{2s}}&=\int_{-1}^{0}\int_{0}^1
\frac{dx\,dy}{(x-y)^{2s}} \\
&=\frac
{1}{1-2s}\int_{-1}^{0} \bigl((1-y)^{1-2s}-(-y)^{1-2s}\bigr)\,dy\\
&= \frac{1}{2(1-s)} \frac{2^{2-2s}-2}{1-2s}\\
&\le \frac{C}{1-s}.
\end{split}
\]
For $s=1/2$ we have
\[
\int_{R-1}^R\int_{R}^{R+1}\frac{dx\,dy}{|x-y|}=\int_{-1}^{0}\int_{0}^1
\frac{\,dx\,dy}{x-y}
=\int_{-1}^{0} \log\bigl(\textstyle\frac{1-y}{-y}\bigr)\,dy
\le C.
\]

It remains to bound the third term  for $n>1$.  We proceed as follows:
\[
\begin{split}
\int_{B_R\setminus B_{R-1}}\int_{B_{R+1\setminus B_R} } \frac{\,dx \,dy}{|x-y|^{n+2s-1}}&= \int_{R-1}^{R}\int_{R}^{R+1}\int_0^\pi\frac{C r_1^{n-1}r_2^{n-1} (\sin \theta)^{n-2} \,d\theta\,dr_1\,dr_2}{(r_1^2+r_2^2-2r_1r_2\cos\theta)^{\frac{n+2s-1}{2}}}\\
&\le\int_{R-1}^{R}\int_{R}^{R+1}\int_0^\pi\frac{CR^{n-2} r_1r_2 \,d\theta\,dr_1\,dr_2}{(r_1^2+r_2^2-2r_1r_2\cos\theta)^{\frac{2s+1}{2}}}\,,
\end{split}
\]
where we have used that, for all
$r_1$, $r_2$ and $\theta$ in the domain of integration, we have
\[\frac{r_1r_2 \sin \theta}{(r_1^2+r_2^2-2r_1r_2\cos\theta)^{\frac{1}{2}}}\le R\,.\]
This follows from $r_2\le R$ and the fact that $x\sin\theta\le (x^2+1-2x\cos\theta)^{1/2}$ for all $\theta$ and $x$ (where we take $x =r_1/r_2$).

Next, we bound (here we make the change $1-\cos\theta =t^2$)
\[\begin{split}
\int_0^\pi\frac{CR^{n-2} r_1r_2 \,d\theta}{(r_1^2+r_2^2-2r_1r_2\cos\theta)^{\frac{2s+1}{2}}} &=
\int_0^\pi\frac{CR^{n-2} r_1r_2 \,d\theta}{((r_1-r_2)^2+2r_1r_2(1-\cos\theta))^{\frac{2s+1}{2}}}
\\
&\le 2\int_0^{\pi/2}\frac{CR^{n-2} r_1r_2 \,d\theta}{((r_1-r_2)^2+2r_1r_2(1-\cos\theta))^{\frac{2s+1}{2}}}
\\
&\le\int_0^1\frac{CR^{n}  \,dt}{((r_1-r_2)^2+R^2t^2)^{\frac{2s+1}{2}}}
\\
&\le \frac{CR^{n}}{(r_1-r_2)^{2s+1}}\int_0^\infty
\frac{dt}{\bigl(1+\bigl(\frac{Rt}{r_1-r_2}\bigr)^2\bigr)^{\frac{2s+1}{2}}}\\
&=\frac{CR^{n-1}}{(r_1-r_2)^{2s}}\int_0^{\infty}
 \frac{d\xi}{(1+\xi^2)^{\frac{2s+1}{2}}}\\
&\le \frac{1}{s}\frac{CR^{n-1}}{(r_1-r_2)^{2s}}.
\end{split}
\]
We have thus come back to  the situation of  dimension $n=1$. Indeed, from the previous inequalities
\[
\begin{split}
\int_{B_R\setminus B_{R-1}}\int_{B_{R+1\setminus B_R} } \frac{\,dx \,dy}{|x-y|^{n+2s-1}}&\le
\frac{CR^{n-1}}{s}\int_{R-1}^{R}\int_{R}^{R+1}\frac{\,dr_1\,dr_2}{(r_1-r_2)^{2s}}\\
&\le
\frac{CR^{n-1}}{s(1-s)}\,,
\end{split}
\]
where $C$ depends only on $n$.

Putting together the bounds for the three terms, we have proved that
\[
\int_{B_{R}} \int_{\cC B_{R} } \frac{\min\{1,|x-y|\}}{|x-y|^{n+2s}}\,dx \,dy\le \frac{C}{s(1-s)}\Phi_{n,s}(R).
\]

Multiplying this inequality by $c_n(s)$ ---as in the statement of the claim--- and using that $\frac{c_n(s)}{s(1-s)}$ is uniformly bounded for $s\in[0,1)$ ---as it is immediate to check in \eqref{eq_c_n(s)}---, we conclude the proof.
\end{proof}

\section{Extension problem}\label{sec6}
In this section we give a local formulation of problem \eqref{problema1}:
\[\littint (-\Delta)^s u\, d\mu(s) = f(u)\,,\]
where we define $f= -W'$.
This can be done by working, at the same time, with several (or possibly infinitely many) extension problems of
Caffarelli-Silvestre type \cite{CS}.

Given $s\in(0,1)$ and $u\in L^{\infty}(\R^n)$, the {\em $s$-extension of $u$ to $\R^{n+1}_+$} is defined by
\[\tilde{u}_s(\cdot,\lambda)= P_s(\cdot,\lambda)\ast u \quad \mbox{in }\R^n\]
for all $\lambda>0$, where $P_s$ is
\[
 P_s(x,\lambda)= p_{n,s}\frac{\lambda^{2s}}{\bigl(|x|^2 +\lambda^2\bigr)^{\frac{n+2s}{2}}}\,,
\]
and $p_{n,s}$ is the constant for which $\int_{\R^n} P_s(x,\lambda)\,dx=1$.

The function $\tilde{u}_s$ solves the extension problem of Caffarelli and Silvestre \cite{CS}:
\begin{equation}
\left\{
\begin{array}{ll}
\nabla\cdot(\lambda^{1-2s}\nabla \tilde{u}_s)=0 \quad &\text{in }\R^{n+1}_+=\{(x,\lambda)\,,\  x\in\R^n\,, \ \lambda>0\}\,,\\
\tilde{u}_s(x,0) = u(x)  \quad&\text{on } \{\lambda=0\}\,.
\end{array}
\right.
\end{equation}
Moreover, from results in \cite{CS} we have that, for $u$ regular enough,
\[(-\Delta)^{s} u(x) = - d(s)\lim_{\lambda\to 0^+} \lambda^{1-2s} \partial_{\lambda} \tilde{u}_s (x,\lambda)\,,\]
where $d(s)$ is a constant depending only on $s$ (see also \cite{C-Si1}).

From the considerations above, to every solution of problem \eqref{problema1}, it corresponds a solution of the following system of PDEs:
\begin{equation}\label{system_extended}
\left\{
\begin{array}{ll}
\nabla\cdot(\lambda^{1-2s}\nabla \tilde{u}_s)=0 \quad &\text{in }\R^{n+1}_+\,,\\
\tilde{u}_s(x,0) = u(x)  \quad&\text{on } \{\lambda=0\}\,,\\
- \littint d\mu(s) \,d(s)\lim_{\lambda\to0^+} \lambda^{1-2s} \partial_{\lambda} \tilde{u}_s (x,\lambda)  = f(u)&\text{on } \{\lambda=0\}.
\end{array}
\right.
\end{equation}
This system possibly involves infinitely many unknowns, the functions $\{\tilde{u}_s\}_{s\in{\rm supp}\,\mu}$. Note that if we consider the operator $L= \sum_{i=1}^K \mu_i(-\Delta)^{s_i}$, the number of unknowns appearing in the system is $K$ (plus the common boundary value $u$).

This leads us to consider the energy functional
\[\begin{split}
\tilde \cE(\bs{w},\Omega)= \tilde\cK(\bs{w},\Omega)+ \int_{\unl{\Omega}} W(\unl{\bs{w}})\,dx\,,
\end{split}
\]
with
\[\tilde \cK(\bs{w},\Omega) = \frac{1}{2}\littint d\mu(s) \int_{\Omega^+} d(s)\lambda^{1-2s}|\nabla w_s|^2\,dx\,d\lambda\,,\]
where $\Omega\subset\overline{\R^{n+1}_+}$ is open relatively to $\overline{\R^{n+1}_+}$ and is Lipschitz, and $\Omega^+$ and $\unl{\Omega}$ are, respectively, $\Omega\cap\{\lambda>0\}$ and $\Omega\cap\{\lambda=0\}$.
Here, $\bs{w}=\{w_s\}_{s\in{\rm supp}\,\mu}$ denotes a family of bounded functions in $C(\overline{\R^{n+1}_{+})}$ with the property that the traces in $\R^n$ of all the $w_s\in\bs{w}$ coincide.
We then say that such a family $\bs{w}$ has {\em common trace} in $\R^n$ and denote by  $\unl{\bs{w}}$ the function $w_s|_{\{\lambda=0\}}$ (which is the same  for all $s$).

Formally, the Euler-Lagrange equations for  minimizers of $\tilde\cE$ in $\overline{\R^{n+1}_+}$ are \eqref{system_extended}. The following claim relates the ``kinetic parts'' of the energies $\cE$ in $\R^n$ and $\tilde\cE$ in $\overline{\R^{n+1}_+}$.
We outline its proof even if we will not use the claim in the rest of the paper.

\begin{claim}
Let $\varphi$ be such that $\cK(\varphi,\R^n)<\infty$  and, for each $s\in{\rm supp}\,\mu$, let $\tilde \varphi_s$ be the $s$-extension of $\varphi$ to $\R^{n+1}_+$.
Then, the family of $s$-extensions $\bs{\tilde\varphi}=\{\tilde\varphi_s\}_{s\in{\rm supp}\,\mu}$ satisfies
\[\tilde \cK(\bs{\tilde\varphi},\overline{\R^{n+1}_+}) = \cK(\varphi,\R^{n})< \infty\,.\]

Moreover, for every pair of functions $\varphi$, $\psi$ defined in $\R^n$ such that $\cK(\varphi,\R^n)<\infty$, $\cK(\psi,\R^n)<\infty$, and $\varphi\equiv \psi$ outside $B_R$, we have
\[ \tilde \cK(\bs{\tilde \varphi},\overline{\R^{n+1}_+}) -\tilde \cK(\bs{\tilde \psi},\overline{\R^{n+1}_+}) = \cK(\varphi,B_R)- \cK(\psi,B_R)\,,\]
where $\bs{\tilde \varphi}=\{\varphi_s\}$ and  $\bs{\tilde \psi}=\{\psi_s\}$ are the families of $s$-extensions.
\end{claim}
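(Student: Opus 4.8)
The plan is to reduce the claim to the classical single-exponent identity, which is already known from the Caffarelli--Silvestre extension theory, and then integrate against $d\mu(s)$.

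\medskip

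First I would recall (or re-derive) the single-exponent statement: if $\tilde\varphi_s$ is the $s$-extension of $\varphi$, then
\[
\frac{d(s)}{2}\int_{\R^{n+1}_+}\lambda^{1-2s}|\nabla\tilde\varphi_s|^2\,dx\,d\lambda = \cK^s(\varphi,\R^n),
\]
provided $\cK^s(\varphi,\R^n)<\infty$, where $\cK^s$ is the Gagliardo seminorm in \eqref{energyfunct1}. This is the well-known fact that the extension realizes the $\dot H^s$ seminorm (the energy of the minimizer of the weighted Dirichlet integral with boundary datum $\varphi$ equals $\langle(-\Delta)^s\varphi,\varphi\rangle$ up to the normalizing constant $d(s)$), for which one cites \cite{CS, C-Si1}. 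The key point is that the constant $d(s)$ appearing in $\tilde\cK$ is precisely the one chosen so that this equality holds with constant $1$.

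\medskip

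Next, for the first assertion, I would integrate this identity in $s$ against $d\mu(s)$. By the definition of $\tilde\cK$,
\[
\tilde\cK(\bs{\tilde\varphi},\overline{\R^{n+1}_+}) = \frac12\littint d\mu(s)\, d(s)\int_{\R^{n+1}_+}\lambda^{1-2s}|\nabla\tilde\varphi_s|^2 = \littint \cK^s(\varphi,\R^n)\,d\mu(s) = \cK(\varphi,\R^n),
\]
the last equality being the definition \eqref{energyfunct}. Finiteness of each $\cK^s(\varphi,\R^n)$ for $\mu$-a.e.\ $s$ follows from $\cK(\varphi,\R^n)<\infty$; strictly speaking one should first justify applying the single-exponent identity, but since both sides are nonnegative and measurable in $s$, Tonelli's theorem makes the interchange of $\int d\mu(s)$ and the integral over $\R^{n+1}_+$ automatic, so no integrability hypothesis beyond $\cK(\varphi,\R^n)<\infty$ is needed.

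\medskip

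For the second, ``localized'' assertion, the subtlety is that $\cK^s(\varphi,B_R)$ and $\int_{\overline{\R^{n+1}_+}}\lambda^{1-2s}|\nabla\tilde\varphi_s|^2$ are each infinite in general, so one cannot simply restrict the previous identity. Instead I would use that $\varphi\equiv\psi$ outside $B_R$: then $\tilde\varphi_s-\tilde\psi_s$ has boundary trace supported in $\overline{B_R}$, and expanding the difference of kinetic energies,
\[
\int_{\R^{n+1}_+}\lambda^{1-2s}\bigl(|\nabla\tilde\varphi_s|^2-|\nabla\tilde\psi_s|^2\bigr)
= \int_{\R^{n+1}_+}\lambda^{1-2s}\nabla(\tilde\varphi_s-\tilde\psi_s)\cdot\nabla(\tilde\varphi_s+\tilde\psi_s),
\]
and integrating by parts (both extensions are $\lambda^{1-2s}$-harmonic) this becomes a boundary integral over $\{\lambda=0\}$ involving $(\varphi-\psi)$ times the co-normal derivatives, i.e.\ times $(-\Delta)^s\varphi$ and $(-\Delta)^s\psi$ via the Caffarelli--Silvestre boundary relation; multiplying by $d(s)/2$ this equals $\frac12\langle\varphi,\varphi\rangle_{B_R,s}-\frac12\langle\psi,\psi\rangle_{B_R,s} = \cK^s(\varphi,B_R)-\cK^s(\psi,B_R)$, using that $\langle\varphi-\psi,\cdot\rangle$ only sees $B_R$ because $\varphi-\psi$ vanishes outside. (Equivalently, one writes $\cK^s(\varphi,B_R)-\cK^s(\psi,B_R) = \frac12\langle\varphi-\psi,\varphi+\psi\rangle_{B_R,s}$ directly from \eqref{energyfunct1} and \eqref{eq:scalar_prod3}, since the kernel terms outside $B_R\times B_R$ cancel, and matches this with the boundary integral.) Integrating the resulting finite identity against $d\mu(s)$ and using \eqref{energyfunct} gives the claim. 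The main obstacle is this last cancellation bookkeeping — making sure that the infinite parts of the energies genuinely cancel — which is handled cleanly by working with the difference $\tilde\varphi_s-\tilde\psi_s$ from the start and never with $\tilde\varphi_s$ alone.
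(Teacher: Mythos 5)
Your proposal is correct and takes essentially the same route as the paper: the paper reduces to $\varphi\in C^\infty_c(\R^n)$ by approximation, performs exactly the integration by parts you describe to get $\frac{d(s)}{2}\int_{\R^{n+1}_+}\lambda^{1-2s}|\nabla\tilde\varphi_s|^2 = \cK^s(\varphi,\R^n)$, integrates in $d\mu(s)$, and disposes of the second statement ``similarly,'' which your difference argument with $\tilde\varphi_s-\tilde\psi_s$ fills in correctly. One small remark: since the hypotheses of the second part already assume $\cK(\varphi,\R^n)<\infty$ and $\cK(\psi,\R^n)<\infty$, the energies you worry might be infinite are in fact finite for $\mu$-a.e.\ $s$, so the localized identity also follows by simply subtracting the first-part identities and cancelling the common contribution of the region $\cC B_R\times \cC B_R$, where $\varphi\equiv\psi$.
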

\begin{proof}
We may assume that $\varphi\in C^\infty_c(\R^n)$ by an approximation argument.

Integrating by parts we obtain
\begin{equation}\label{nomnom}
\begin{split}
\int_{\R^{n+1}_+} d(s)\lambda^{1-2s}|\nabla \varphi_s|^2\,dx\,d\lambda &=  -  \int_{\R^{n+1}_+} d(s) {\rm div} \bigl(\lambda^{1-2s} \nabla \varphi_s\bigl) \varphi_s \,dx\,d\lambda \\
& \hspace{30mm} - \lim_{\lambda\searrow 0} \int_{\R^n}  d(s)\lambda^{1-2s} (\partial_\lambda \varphi_s) \varphi_s\,dx
\\
&= 0 + \int_{\R^n}   \varphi (-\Delta)^s \varphi \,dx = 2\cK^s(\varphi, \R^n).
\end{split}
\end{equation}
The first part of the Claim follows integrating \eqref{nomnom} with respect to $d\mu(s)$.

The second part of the Claim is proven similarly.
\end{proof}

From here, by reproducing almost exactly the arguments in \cite{CRS}, next proposition is proven. It extends Lemma 7.2 in \cite{CRS} for nonlocal minimal surfaces  to our situation. Let us point out that the next proposition is not used in the sequel, but we state it since it gives an important structural property of
our extension property. As a consequence of it, we can obtain a close relation between minimizers of $\cE$ in $\R^n$ and of $\tilde \cE$ in $\overline{\R^{n+1}_+}$.

\begin{prop}\label{prop_mzr_baix_implica_mzr_dalt}
Assume that $u: \R^n\rightarrow \R$ is such that $\cK(u,B_1)<\infty$ and let $\bs{\tilde u}=\{\tilde u_s\}_{s\in{\rm supp}\,\mu}$ be the family of $s$-extensions. Let $\varphi\in C^{\infty}_c(B_1)$. Then,
\[\inf_{\Omega, \bs{w}} \littint d\mu(s)\, d(s)  \int_{\Omega^+} \lambda^{1-2s}(|\nabla w_s|^2-|\nabla \tilde u_s|^2)\,dx \,d\lambda= \cK(u+\varphi,B_1)- \cK(u,B_1)\,,\]
where the infimum is taken among all bounded Lipschitz sets $\Omega\subset \overline{\R^{n+1}_+}$, open relatively to $\overline{\R^{n+1}_+}$, and with $\unl{\Omega}\subset B_1$, and among all families $\bs{w}=\{w_s\}$ having common trace $\unl{\bs{w}}= u+\varphi$ in $\R^n$ and such that $w_s-\tilde u_s$ are compactly supported in $\Omega = \Omega^+\cup \unl{\Omega}$.
\end{prop}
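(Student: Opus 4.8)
The plan is to follow, essentially verbatim, the proof of Lemma~7.2 in \cite{CRS}, establishing the inequalities ``$\ge$'' and ``$\le$'' separately; the one new feature is that $\bs w=\{w_s\}_{s\in\mathrm{supp}\,\mu}$ is a whole family, so all the estimates below have to be carried out uniformly in $s$. Write an admissible competitor as $w_s=\tilde u_s+\psi_s$, so that $\psi_s$ is compactly supported in $\Omega$, has common trace on $\R^n$ equal to $\varphi$ (hence supported in $\unl\Omega\subset B_1$), and
\[
\int_{\Omega^+}\lambda^{1-2s}\bigl(|\nabla w_s|^2-|\nabla\tilde u_s|^2\bigr)
=2\int_{\Omega^+}\lambda^{1-2s}\nabla\tilde u_s\cdot\nabla\psi_s
+\int_{\Omega^+}\lambda^{1-2s}|\nabla\psi_s|^2 .
\]
Since $\tilde u_s$ solves $\nabla\cdot(\lambda^{1-2s}\nabla\tilde u_s)=0$ in $\R^{n+1}_+$ and $\psi_s$ has compact support in $\Omega$, integrating the first term by parts turns it into a boundary integral on $\{\lambda=0\}$ that involves $\psi_s$ only through its trace $\varphi$ (by the Caffarelli--Silvestre identity, interpreted by duality against $\varphi\in C^\infty_c(B_1)$, so no regularity of $u$ beyond $u\in L^\infty$ is needed). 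Hence, after multiplying by $d(s)$ and integrating $d\mu(s)$, the functional in the proposition equals a term that is the same for every admissible $(\Omega,\bs w)$ plus $\littint d\mu(s)\,d(s)\int_{\Omega^+}\lambda^{1-2s}|\nabla\psi_s|^2$, so only the latter has to be optimized.

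For ``$\ge$'': extending $\psi_s$ by $0$ off $\Omega$ yields a function on $\overline{\R^{n+1}_+}$ with trace $\varphi$ and the same weighted Dirichlet energy, and the $s$-extension $\tilde\varphi_s$ minimizes that energy among all functions with trace $\varphi$ (the variational characterization of the Caffarelli--Silvestre extension \cite{CS}); thus $\int_{\Omega^+}\lambda^{1-2s}|\nabla\psi_s|^2\ge\int_{\R^{n+1}_+}\lambda^{1-2s}|\nabla\tilde\varphi_s|^2$ for every admissible competitor. Plugging this in, and evaluating the resulting bound by the same computation as in the proof of the preceding Claim together with the elementary identity $\cK^s(u+\varphi,B_1)-\cK^s(u,B_1)=\cK^s(\varphi,\R^n)+\int_{\R^n}u\,(-\Delta)^s\varphi\,dx$ (which holds because $\varphi$ vanishes outside $B_1$), one obtains that the functional in the proposition is $\ge\cK(u+\varphi,B_1)-\cK(u,B_1)$.

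For ``$\le$'': the natural optimum $\psi_s=\tilde\varphi_s$ is not admissible, because $\tilde\varphi_s$ is neither compactly supported nor zero near $\{\lambda=0\}\setminus B_1$, so one truncates. Fix a bounded, relatively open, Lipschitz ``bubble'' $\Omega_M\subset\overline{\R^{n+1}_+}$ with $\unl{\Omega_M}\subset B_1$ which sits on $B_1$ and opens up only after $\lambda$ exceeds a small $\delta=\delta(M)$, and a cutoff $\eta_M$, compactly supported in $\Omega_M$ and \emph{independent of $s$}, with $\eta_M\equiv1$ on $\{\lambda=0\}\cap\mathrm{supp}\,\varphi$ and $\eta_M\to1$ pointwise in $\overline{\R^{n+1}_+}$ as $M\to\infty$. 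Then $w_s^M=\tilde u_s+\eta_M\tilde\varphi_s$ is admissible with common trace $u+\varphi$, and by the reduction above the value of the functional at $w_s^M$ is the competitor-independent term plus $\littint d\mu(s)\,d(s)\int_{\R^{n+1}_+}\lambda^{1-2s}|\nabla(\eta_M\tilde\varphi_s)|^2$. It then remains to show that $\int_{\R^{n+1}_+}\lambda^{1-2s}|\nabla(\eta_M\tilde\varphi_s)|^2\to\int_{\R^{n+1}_+}\lambda^{1-2s}|\nabla\tilde\varphi_s|^2$ as $M\to\infty$, uniformly in $s$: the $\eta_M^2|\nabla\tilde\varphi_s|^2$ part converges by dominated convergence, and the error term $\int\lambda^{1-2s}\tilde\varphi_s^2|\nabla\eta_M|^2$ is handled by a logarithmic-type cutoff estimate exploiting the explicit decay of the Poisson kernel $P_s$. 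Letting $M\to\infty$ and comparing with the ``$\ge$'' computation gives the equality.

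The hard part is exactly this truncation. The constraint $\unl{\Omega_M}\subset B_1$ forbids the bubble from being wide near $\{\lambda=0\}$, which is precisely where $\tilde\varphi_s$ still fails to have compact support; but because $\mathrm{supp}\,\varphi$ is a compact subset of $B_1$, the extension $\tilde\varphi_s$ is only of size $O(\lambda^{2s})$ on the thin slanted part of $\partial\Omega_M$ near $\{\lambda=0\}$, so that part costs negligible cutoff energy provided the opening rate $\delta(M)$ of the bubble is chosen small enough relative to $M$ (e.g.\ $\delta=M^{-2}$); this reconciles ``$\unl{\Omega_M}\subset B_1$'', ``$\eta_M\to1$'' and ``cutoff energy $\to0$'' simultaneously, and uniformly in $s$. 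Everything else --- the integration by parts, the minimality of the $s$-extension, and the elementary nonlocal identity --- is routine, and the passage from a fixed $s$ to $\littint d\mu(s)$ is immediate since $\mu$ is a probability measure and all the $s$-dependent estimates above are uniform on $\mathrm{supp}\,\mu$.
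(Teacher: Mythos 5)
Your proposal is correct and follows essentially the same route as the paper, which gives no self-contained proof but states that the proposition is obtained ``by reproducing almost exactly the arguments in \cite{CRS}'' (Lemma 7.2 there): your decomposition $w_s=\tilde u_s+\psi_s$, the use of the degenerate harmonicity of $\tilde u_s$ to make the cross term competitor-independent, the variational characterization of the Caffarelli--Silvestre extension for the lower bound, and the $s$-independent cutoff adapted to the constraint $\unl{\Omega}\subset B_1$ for the upper bound are precisely that argument, carried out uniformly in $s\in\mathrm{supp}\,\mu$ as needed here. The only caveats are bookkeeping-level (the normalization of $d(s)$ versus the factor $\tfrac12$ in $\tilde\cK$, and that $\eta_M\to1$ can only hold away from $(\R^n\setminus B_1)\times\{0\}$, which suffices since a.e.\ convergence is all that is used), not gaps in the method.
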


Let us now define the notion of minimizers of $\cE$ and $\tilde \cE$.

\begin{defn}\label{def-localmzrs}
We say that $u \in L^{\infty}(\R^n)$ is a minimizer of $\cE$ in $\R^n$ ---given by \eqref{energyfunct}, \eqref{energyfunct1}--- if $|u|\le 1$ in all of $\R^n$ and for every $\Omega\subset\subset \R^n$ we have $\cE(u,\Omega)<\infty$ and
\[\cE(u,\Omega)\le \cE(u+\xi,\Omega)\quad\mbox{for every } \xi\in C^\infty_c(\Omega)\,.\]
\end{defn}

\begin{defn}\label{def-localmzrsadalt}
We say that a family $\bs{v}= \{v_s\}_{s\in{\rm supp}\,\mu}\subset C(\overline{\R^{n+1}_+})$ having common trace $\unl{\bs{v}}$ in $\R^n$ is a minimizer of $\tilde\cE$ in $\overline{\R^{n+1}_+}$ if $|\bs{v}|\le1$ on all of $\overline{\R^{n+1}_+}$ and for every $\Omega\subset \overline{\R^{n+1}_+}$ bounded, Lipschitz, and relatively open, we have $\tilde \cE(\bs{v},\Omega)<\infty$ and
\[\tilde \cE(\bs{v},\Omega)\le \tilde \cE(\bs{v}+\bs{\xi},\Omega)\quad\mbox{for every } \bs{\xi}=\{\xi_s\}\subset C^\infty_c(\Omega) \mbox{ with common trace }\unl{\bs{\xi}}
 \mbox{ in } \R^n.\]
\end{defn}

As a consequence of Proposition \ref{prop_mzr_baix_implica_mzr_dalt}, we have the following link between  minimizers of $\cE$ and of $\tilde\cE$. This is related to Proposition 7.3 in \cite{CRS}.

\begin{prop}\label{prop_mzr_baix_sii_mzr_dalt}
A function $u$ is a minimizer of $\cE$ if, and only if,  the family of $s$-extensions $\bs{\tilde{u}}=\{\tilde u_s\}_{s\in{\rm supp}\,\mu}$ is a minimizer of $\tilde \cE$.
\end{prop}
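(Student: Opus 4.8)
The plan is to deduce Proposition~\ref{prop_mzr_baix_sii_mzr_dalt} from Proposition~\ref{prop_mzr_baix_implica_mzr_dalt}, using the localization property it provides together with the standard fact (implicit in the Caffarelli--Silvestre setting) that the $s$-extension is the energy-minimizing extension of its own trace. The key observation is that $\tilde\cE$ is minimized \emph{separately} in each ``$\lambda>0$'' fiber once the common trace is fixed: given any family $\bs{w}=\{w_s\}$ with common trace $\varphi$ on $\R^n$, replacing each $w_s$ by the $s$-extension $\tilde\varphi_s$ of $\varphi$ only decreases each term $\int_{\Omega^+}d(s)\lambda^{1-2s}|\nabla w_s|^2$, because $\tilde\varphi_s$ is the unique minimizer of the $\lambda^{1-2s}$-weighted Dirichlet energy among functions with boundary trace $\varphi$ on $\{\lambda=0\}$ (and the boundary behavior at infinity is controlled since we work with compact perturbations of a fixed extension). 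Hence minimizing $\tilde\cE$ reduces to minimizing over the common trace alone, i.e.\ to minimizing $\bs{\tilde\varphi}\mapsto\tilde\cK(\bs{\tilde\varphi},\Omega)+\int_{\unl\Omega}W(\varphi)\,dx$; and by the Claim preceding Proposition~\ref{prop_mzr_baix_implica_mzr_dalt} together with Proposition~\ref{prop_mzr_baix_implica_mzr_dalt}, the difference of kinetic energies between two such minimizing extensions equals the difference $\cK(\varphi,B_R)-\cK(\psi,B_R)$ of the corresponding nonlocal energies downstairs.

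Concretely I would argue as follows. First, suppose $u$ is a minimizer of $\cE$ and let $\bs{\tilde u}=\{\tilde u_s\}$ be its family of $s$-extensions; note $|\bs{\tilde u}|\le 1$ by the maximum principle applied to each extension (the Poisson kernel $P_s$ is a probability kernel). Fix a bounded Lipschitz relatively open $\Omega\subset\overline{\R^{n+1}_+}$ and a competitor $\bs{v}=\bs{\tilde u}+\bs{\xi}$ with $\bs{\xi}=\{\xi_s\}\subset C^\infty_c(\Omega)$ having common trace $\unl{\bs\xi}=:\varphi$ supported in $\unl\Omega\subset\subset\R^n$. Then
\[
\tilde\cE(\bs{v},\Omega)-\tilde\cE(\bs{\tilde u},\Omega)
= \littint d\mu(s)\,d(s)\int_{\Omega^+}\lambda^{1-2s}\bigl(|\nabla v_s|^2-|\nabla \tilde u_s|^2\bigr)\,dx\,d\lambda
+ \int_{\unl\Omega}\bigl(W(u+\varphi)-W(u)\bigr)\,dx.
\]
By Proposition~\ref{prop_mzr_baix_implica_mzr_dalt} the first term on the right is bounded below by $\cK(u+\varphi,B_1)-\cK(u,B_1)$ (after rescaling $B_1$ to a ball containing $\unl\Omega$, which is harmless since the statement is scale-covariant in the obvious way, or simply by choosing the normalization so that $\unl\Omega\subset B_1$). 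Therefore
\[
\tilde\cE(\bs{v},\Omega)-\tilde\cE(\bs{\tilde u},\Omega)\ \ge\ \bigl(\cK(u+\varphi,B_1)-\cK(u,B_1)\bigr)+\int_{B_1}\bigl(W(u+\varphi)-W(u)\bigr)\,dx = \cE(u+\varphi,B_1)-\cE(u,B_1)\ \ge\ 0,
\]
the last inequality being the minimality of $u$ with $\xi:=\varphi\in C^\infty_c(B_1)$. This proves $\bs{\tilde u}$ is a minimizer of $\tilde\cE$.

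For the converse, assume $\bs{\tilde u}$ is a minimizer of $\tilde\cE$ and let $\varphi\in C^\infty_c(B_1)$; I want $\cE(u+\varphi,B_1)\ge\cE(u,B_1)$. Take $\bs{w}=\{w_s\}$ and $\Omega$ to be an arbitrary admissible pair in the infimum of Proposition~\ref{prop_mzr_baix_implica_mzr_dalt} (common trace $u+\varphi$, $w_s-\tilde u_s$ compactly supported in $\Omega$); then $\bs{w}=\bs{\tilde u}+\bs{\xi}$ with $\bs{\xi}\in C^\infty_c(\Omega)$ of common trace $\varphi$ (approximating if necessary to get smoothness), and minimality of $\bs{\tilde u}$ gives
\[
\littint d\mu(s)\,d(s)\int_{\Omega^+}\lambda^{1-2s}\bigl(|\nabla w_s|^2-|\nabla \tilde u_s|^2\bigr)\,dx\,d\lambda\ \ge\ \int_{B_1}\bigl(W(u)-W(u+\varphi)\bigr)\,dx.
\]
Taking the infimum over all such $(\Omega,\bs{w})$ and invoking Proposition~\ref{prop_mzr_baix_implica_mzr_dalt} for the left side, we get $\cK(u+\varphi,B_1)-\cK(u,B_1)\ge\int_{B_1}(W(u)-W(u+\varphi))\,dx$, i.e.\ $\cE(u+\varphi,B_1)\ge\cE(u,B_1)$; scaling and translating $B_1$ covers every $\Omega\subset\subset\R^n$. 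One should also record that $\cK(u,B_1)<\infty$ whenever $\bs{\tilde u}$ has finite energy, which follows from the first part of the Claim. The main obstacle I anticipate is purely bookkeeping rather than conceptual: matching the normalization of Proposition~\ref{prop_mzr_baix_implica_mzr_dalt} (stated on $B_1$) with general $\Omega\subset\subset\R^n$, checking the admissibility/density of the competitor families (smoothness of $\bs{\xi}$, compact support of $w_s-\tilde u_s$, the $|\bs{v}|\le 1$ constraint which is automatic for the extension and can be imposed on competitors by truncation at level $\pm1$ since $W$ is minimized there), and justifying the finiteness and the passage to the infimum. All of these are routine given the tools already assembled, so no step presents a genuine difficulty once Proposition~\ref{prop_mzr_baix_implica_mzr_dalt} is in hand.
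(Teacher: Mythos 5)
Your deduction is exactly the route the paper intends: the paper states this proposition without a written proof, presenting it as a consequence of Proposition \ref{prop_mzr_baix_implica_mzr_dalt} in the spirit of Proposition 7.3 of \cite{CRS}, and your two-directional transfer of minimality through the localization infimum (downstairs minimality plus the infimum identity gives upstairs minimality against any competitor pair, and conversely testing upstairs minimality against all admissible pairs and passing to the infimum gives downstairs minimality) is precisely that argument. The only quibbles are bookkeeping: your kinetic-energy difference drops the factor $\frac12$ appearing in the definition of $\tilde\cK$ (a normalization that is already not fully consistent between Proposition \ref{prop_mzr_baix_implica_mzr_dalt} and the Claim preceding it, so it must be harmonized either way for the kinetic and potential terms to combine correctly), and the points you flag as routine (smoothing of competitors preserving the common trace, local finiteness of the energies, and replacing $B_1$ by an arbitrary ball, which holds by repeating the proof rather than by literal scaling since $L$ is not homogeneous) are indeed the standard details.
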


The following  further relation between $\cE$ and  $\tilde\cE$ is the only one that we will use in the rest of the paper. It applies to functions possibly having infinite energy in all of $\R^n$.
It states that an estimate on the $\cE$ energy in balls for a function $u:\R^n\rightarrow[-1,1]$ (satisfying regularity
estimates) is immediately translated into an estimate of the $\tilde \cE$ energy in cylinders for the family of
$s$-extensions $\{\tilde u_s\}$.
In the remaining part of the paper we denote by $C_R$ the open cylinder in $\overline{\R^{n+1}_+}$ having as bottom $B_R\subset\R^n$ and height $R$ in the $\lambda$ direction:
\begin{equation}\label{cylinder}
C_R=\{(x,\lambda)\,,\ |x|<R\,,\ 0\le\lambda<R\}\,.
\end{equation}

\begin{lem}\label{relationKdown_up_stairs}
Let $u\in C^{2,\gamma}(\R^n)$ with $\|u\|_{C^{2,\gamma}(\R^n)}\le C_0$. Let $\bs{\tilde u}=\{\tilde u_s\}$ be the family of $s$-extensions ---note that $\unl{\bs{\tilde u}}=u$. Then, for $R\ge2$ we have
\[|\tilde \cK(\bs{\tilde u}, C_R) - \cK(u,B_R)|  \le C C_0^2\Phi_{n,s_*}(R)\,,\]
where $C$ depends only on $n$ and $s_*$.
\end{lem}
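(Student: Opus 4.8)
The plan is to compare, for each fixed $s\in[s_*,1)$, the $\lambda$-weighted Dirichlet energy of the extension $\tilde u_s$ on the cylinder $C_R$ with the seminorm $\cK^s(u,B_R)$, and then to integrate the resulting difference against $d\mu(s)$. The model for the comparison is the identity established in \eqref{nomnom} for compactly supported $\varphi$: there the whole-space weighted Dirichlet energy equals $2\cK^s(\varphi,\R^n)$, with no boundary term surviving because $\varphi$ has compact support. Here $u$ is bounded but not decaying, so the same integration by parts on the half-cylinder $C_R^+ = C_R\cap\{\lambda>0\}$ produces the bulk term (which vanishes, since $\tilde u_s$ is $\lambda^{1-2s}$-harmonic) plus a boundary contribution from the lateral and top faces of $C_R$, and a bottom face term that reproduces $\int_{B_R} u(-\Delta)^s u$. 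The strategy is therefore to show that: (i) the part of the extension energy coming from inside the cylinder, modulo the bottom trace term, equals $\cK^s(u,B_R)$ up to the ``tail'' contributions measuring the interaction of $B_R$ with its complement; and (ii) those tail contributions are controlled by $C_0^2$ times the same singular integral $c_n(s)\iint_{B_R\times\cC B_R}\min\{1,|x-y|\}|x-y|^{-n-2s}$ that appears in Claim \ref{claim:energy}, which is bounded by $C\,\Phi_{n,s}(R)/(s(1-s))$ with $c_n(s)/(s(1-s))$ uniformly bounded.

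\emph{Key steps.} First I would record, for $\|u\|_{C^{2,\gamma}}\le C_0$, the pointwise decay of the extension: from the Poisson kernel representation $\tilde u_s(x,\lambda)=P_s(\cdot,\lambda)*u$ one gets $\|\nabla_x\tilde u_s(\cdot,\lambda)\|_\infty\le C C_0$ (Hölder gradient bound passes through the kernel) and $|\partial_\lambda \tilde u_s(x,\lambda)|\le C C_0\,\lambda^{2s-1}$ for $0<\lambda\le 1$, together with $|\lambda^{1-2s}\partial_\lambda \tilde u_s|\to \text{const}\cdot(-\Delta)^s u$ as $\lambda\to 0$; these are exactly the ingredients already used in \eqref{nomnom} and in Section \ref{secreg}. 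Second, I would use the global identity $\tilde\cK(\bs{\tilde u},\overline{\R^{n+1}_+})=\cK(u,\R^n)$ in spirit but localize: the cleanest route is to compare $\tilde u_s$ with the $s$-extension $\tilde v_s$ of a modified function $v=u$ in $B_R$, $v$ suitably cut off outside, but it is simpler to integrate by parts directly on $C_R^+$. Writing
\[
\int_{C_R^+} d(s)\lambda^{1-2s}|\nabla \tilde u_s|^2 = \int_{B_R}\!\! u\,(-\Delta)^s u\,dx \;+\; (\text{lateral/top face terms}),
\]
the face integrals carry a factor $\lambda^{1-2s}\nabla\tilde u_s\cdot\nu$ against $\tilde u_s$; bounding $|\nabla\tilde u_s|\le CC_0\max\{1,\lambda^{2s-1}\}$ and $|\tilde u_s|\le C_0$ and integrating over the faces of $C_R$ gives a bound $\le CC_0^2 R^{n-1}\int_0^R\lambda^{1-2s}\max\{1,\lambda^{2s-2}\}\,d\lambda$, which is $\le CC_0^2\,\Phi_{n,s}(R)$ after splitting the $\lambda$-integral at $\lambda=1$ — precisely matching $\Phi_{n,s}$. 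Third, I would relate $\int_{B_R}u(-\Delta)^s u$ to $2\cK^s(u,B_R)$: by the integration-by-parts identity \eqref{int_by_parts_formulas} applied with $\Omega=B_R$ and $v=u$, one has $\langle u,u\rangle_{B_R,s}=\int_{B_R}(-\Delta)^s u\,u\,dx + c_n(s)\int_{\cC B_R}\!dx\int_{B_R}\!dy\,\frac{(u(x)-u(y))u(x)}{|x-y|^{n+2s}}$, i.e. $2\cK^s(u,B_R) - \int_{B_R}u(-\Delta)^su\,dx$ equals the nonlocal-flux term, which is bounded in absolute value by $CC_0^2\,c_n(s)\iint_{B_R\times\cC B_R}\min\{1,|x-y|\}|x-y|^{-n-2s}\le CC_0^2\,\Phi_{n,s}(R)$ via Claim \ref{claim:energy}. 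Combining the three steps gives $|\int_{C_R^+} d(s)\lambda^{1-2s}|\nabla\tilde u_s|^2 - 2\cK^s(u,B_R)|\le CC_0^2\Phi_{n,s}(R)$ for each $s$; dividing by $2$, integrating against $d\mu(s)$, and using monotonicity of $\Phi_{n,s}$ in $s$ (so $\Phi_{n,s}(R)\le\Phi_{n,s_*}(R)$ for $s\ge s_*$, $R>2$) yields the claimed estimate with $C$ depending only on $n$ and $s_*$.

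\emph{Main obstacle.} The delicate point is the uniformity of all constants in $s\in[s_*,1)$, including as $s\to 1$: the Poisson kernel $P_s$, the constant $d(s)$, and the degenerate weight $\lambda^{1-2s}$ all behave singularly near $s=1$, yet $\mu$ may concentrate mass there. The saving facts are that $\mu$ is a probability measure, that $c_n(s)/(s(1-s))$ is bounded on $[0,1)$ (checked from \eqref{eq_c_n(s)}, as in the proof of Claim \ref{claim:energy}), that $d(s)c_n(s)$-type normalizations combine so that the boundary flux identity $(-\Delta)^s u = -d(s)\lim \lambda^{1-2s}\partial_\lambda\tilde u_s$ holds with the same $c_n(s)$, and that the $\lambda$-integrals $\int_0^R\lambda^{1-2s}\,d\lambda$ and $\int_1^R\lambda^{-1-2s}\,d\lambda$ reproduce exactly the two branches of $\Phi_{n,s}(R)$ (with the $\log R$ case at $s=1/2$). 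So the real work is a careful bookkeeping of these $s$-dependent constants to confirm that the $\mu$-integration converges and produces a bound depending only on $n$ and $s_*$; the geometry of the integration by parts on the cylinder is otherwise routine.
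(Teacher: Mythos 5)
Your overall architecture is exactly the paper's: for each $s$ integrate by parts on the cylinder, so that the bulk term vanishes, the bottom face produces $\int_{B_R}u(-\Delta)^s u\,dx$, and then convert $\int_{B_R}uLu$ into $2\cK(u,B_R)$ via \eqref{int_by_parts_formulas}--\eqref{int_by_parts_formula} with the nonlocal-flux error controlled by Claim \ref{claim:energy}; finally integrate in $d\mu(s)$, use monotonicity of $\Phi_{n,s}$, and absorb the $s\to1$ degeneracy into $d(s)\lesssim 1-s$. All of that matches the paper's proof of Lemma \ref{relationKdown_up_stairs}.

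However, the one genuinely quantitative step ---bounding the lateral and top face terms by $CC_0^2\Phi_{n,s}(R)$--- is wrong as you wrote it. The gradient information you record, $|\nabla_x\tilde u_s|\le CC_0$ and $|\partial_\lambda\tilde u_s|\le CC_0\lambda^{2s-1}$ for $\lambda\le1$, omits the decay in $\lambda$ that the paper takes from Lemma \ref{lem:gradbounds}, namely $|\nabla_x\tilde u_s|+|\partial_\lambda\tilde u_s|\le CC_0/\lambda$ (an interior estimate for the degenerate equation, or a Poisson-kernel computation). Without it, the lateral term is only bounded by $R^{n-1}\int_0^R\lambda^{1-2s}\,d\lambda\sim R^{n+1-2s}$ and the top term by $R^{n}\cdot R^{1-2s}$, each a full factor $R$ larger than $\Phi_{n,s}(R)\sim R^{n-2s}$ when $s<1/2$, so the lemma does not follow. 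Moreover your displayed bound $CC_0^2R^{n-1}\int_0^R\lambda^{1-2s}\max\{1,\lambda^{2s-2}\}\,d\lambda$ is not even finite: on $(0,1)$ the integrand equals $\lambda^{-1}$. The correct bookkeeping is $|\nabla\tilde u_s|\le CC_0\min\{1,\lambda^{-1}\}$ (on the lateral face only $\nabla_x$ enters, so no $\lambda^{2s-1}$ singularity arises there), giving the integrand $\min\{\lambda^{1-2s},\lambda^{-2s}\}$, whose integral over $(0,R)$ is $\frac{1}{2-2s}+\frac{R^{1-2s}-1}{1-2s}$; the factor $\frac{1}{2-2s}$, which blows up as $s\nearrow1$, is then killed by $d(s)/(1-s)\le C$, and the top face uses $|\partial_\lambda\tilde u_s(\cdot,R)|\le CC_0/R$. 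With that repair your argument coincides with the paper's proof; without it the estimate fails by a power of $R$.
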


In the proof of Lemma \ref{relationKdown_up_stairs} we will need the following  elementary  bounds for the extension problems.
\begin{lem}\label{lem:gradbounds}
Assume that $|u|\le C_1$ and $|\nabla u|\le C_2$  in $\R^n$. Then, for $s\in (0,1)$,
the extension of $u$, $\tilde{u}_s$, satisfies
\begin{equation}\label{gradbounds1}
|\tilde{u}_s|\le C_1\quad \mbox{and}\quad \quad |\nabla_x \tilde{u}_s|\le C_2
\end{equation}
in all $\R^{n+1}_{+}$.
Moreover,
\begin{equation}\label{gradbounds2}
|\nabla_x \tilde{u}_s| + |\partial_\lambda \tilde u_s| \le \frac{C C_1}{\lambda} \quad \mbox{for } \lambda>0\,,
\end{equation}
where $C$ depends only on $n$ (and not on $s$).
\end{lem}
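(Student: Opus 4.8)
The plan is to differentiate the Poisson-type convolution representation $\tilde u_s(x,\lambda) = (P_s(\cdot,\lambda) * u)(x)$ directly and estimate the resulting kernels. For the two bounds in \eqref{gradbounds1}, the first, $|\tilde u_s|\le C_1$, is immediate from $\int_{\R^n} P_s(x,\lambda)\,dx=1$ and $P_s\ge0$. For the horizontal gradient, I would write $\nabla_x \tilde u_s(x,\lambda) = (P_s(\cdot,\lambda) * \nabla u)(x)$, moving the $x$-derivative onto $u$ (legitimate since $u$ is Lipschitz with bounded gradient, so $\nabla u \in L^\infty$), and then $|\nabla_x \tilde u_s| \le \|\nabla u\|_{L^\infty} \int P_s(\cdot,\lambda) = C_2$. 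This gives \eqref{gradbounds1} with no $s$-dependence.

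For \eqref{gradbounds2}, the point is that each derivative of $P_s$ in either $x$ or $\lambda$ gains a factor $\lambda^{-1}$ in scale. I would use the scaling $P_s(x,\lambda) = \lambda^{-n} P_s(x/\lambda, 1)$, so that $\nabla_x P_s(x,\lambda) = \lambda^{-n-1}(\nabla_x P_s)(x/\lambda,1)$ and similarly $\partial_\lambda P_s(x,\lambda) = \lambda^{-n-1} Q(x/\lambda)$ for a fixed profile $Q$; then $\int_{\R^n} |\nabla_x P_s(x,\lambda)|\,dx = \lambda^{-1}\int_{\R^n}|(\nabla_x P_s)(y,1)|\,dy$ and likewise for $\partial_\lambda$. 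Hence $|\nabla_x \tilde u_s(x,\lambda)| \le \|u\|_{L^\infty}\,\lambda^{-1}\int_{\R^n}|(\nabla_x P_s)(y,1)|\,dy$ and $|\partial_\lambda \tilde u_s(x,\lambda)| \le \|u\|_{L^\infty}\,\lambda^{-1}\int_{\R^n}|Q(y)|\,dy$. The remaining task is to check that these two integrals are bounded by a constant depending only on $n$, uniformly in $s\in(0,1)$. From the explicit formula $P_s(x,1) = p_{n,s}(|x|^2+1)^{-(n+2s)/2}$, one computes $(\nabla_x P_s)(x,1) = -p_{n,s}(n+2s)x(|x|^2+1)^{-(n+2s+2)/2}$ and a similar expression for $\partial_\lambda P_s|_{\lambda=1}$; the relevant integrals are explicit Beta-type integrals. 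One must also control the normalizing constant $p_{n,s}$, but $1/p_{n,s} = \int_{\R^n}(|x|^2+1)^{-(n+2s)/2}\,dx$ is bounded above and below away from $0$ uniformly for $s$ in compact subsets of $(0,1)$, and in fact stays bounded as $s\to0^+$ and $s\to1^-$ after a short check (or one simply notes $p_{n,s}$ is continuous on $(0,1)$ with finite limits at the endpoints). Combining, the $s$-dependence disappears into a dimensional constant $C$, yielding \eqref{gradbounds2}.

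The main obstacle I anticipate is the uniformity in $s$: one must verify that the constants $p_{n,s}(n+2s)\int_{\R^n}|y|(|y|^2+1)^{-(n+2s+2)/2}\,dy$ and its $\partial_\lambda$-analog do not blow up as $s\to0^+$ or $s\to1^-$. This is routine but needs the endpoint behavior of the relevant Gamma-function ratios, analogous to the bound $c_n(s)/(s(1-s))$ being uniformly bounded used in the proof of Claim \ref{claim:energy}. Everything else is a direct differentiation of a convolution and a change of variables.
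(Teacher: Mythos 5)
Your overall strategy is sound and it is genuinely different from the paper's. The paper does not compute with the kernel at all: it cites Proposition 4.6 of Cabr\'e--Sire \cite{C-Si1}, obtaining \eqref{gradbounds1} from the maximum principle and \eqref{gradbounds2} from interior elliptic estimates combined with a scaling argument, the key observation being that the weight $\lambda^{1-2s}$ is bounded between universal constants on the slab $\{1\le\lambda\le 2\}$ uniformly in $s\in(0,1)$, so the rescaled interior gradient estimate carries an $s$-independent constant. Your route ---differentiating $\tilde u_s=P_s(\cdot,\lambda)*u$, moving $\nabla_x$ onto $u$ (legitimate since $\nabla u\in L^\infty$), and using the scaling $P_s(x,\lambda)=\lambda^{-n}P_s(x/\lambda,1)$ to reduce \eqref{gradbounds2} to $L^1$ bounds on $\nabla_xP_s(\cdot,1)$ and $\partial_\lambda P_s(\cdot,1)$--- is more elementary and self-contained, but it transfers the burden of $s$-uniformity to explicit kernel integrals, which the PDE argument gets for free.

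It is exactly there that your one genuine error sits: the claim that $1/p_{n,s}=\int_{\R^n}(|x|^2+1)^{-(n+2s)/2}\,dx$ stays bounded as $s\to0^+$ is false; this integral diverges like $\omega_{n-1}/(2s)$, so $p_{n,s}\simeq 2s/\omega_{n-1}\to0$. For the horizontal derivative this is harmless, since all you need is an upper bound: $p_{n,s}$ is increasing in $s$, hence $p_{n,s}\le p_{n,1}=C(n)$, and $\int|y|(|y|^2+1)^{-(n+2s+2)/2}dy\le C(n)$ uniformly because the tail decays like $r^{-2-2s}\le r^{-2}$. But for $\partial_\lambda$ the uniformity as $s\to0^+$ hinges precisely on the vanishing of $p_{n,s}$ that you denied: differentiating the explicit kernel,
\[
\partial_\lambda P_s(x,1)=p_{n,s}\Bigl[\,2s\,(|x|^2+1)^{-\frac{n+2s}{2}}-(n+2s)(|x|^2+1)^{-\frac{n+2s+2}{2}}\Bigr],
\]
the first term has $L^1$ norm exactly $2s\,p_{n,s}\int(|x|^2+1)^{-\frac{n+2s}{2}}dx=2s\le 2$; the divergence of the integral is cancelled by $p_{n,s}$, not absent. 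Equivalently, if you work with the rescaled profile $Q(y)=-nP_s(y,1)-y\cdot\nabla P_s(y,1)$, the dangerous piece is $p_{n,s}(n+2s)\int|y|^2(|y|^2+1)^{-\frac{n+2s+2}{2}}dy$, whose integral also blows up like $1/(2s)$ and is again compensated by $p_{n,s}\sim 2s/\omega_{n-1}$. With this correction (upper bound $p_{n,s}\le p_{n,1}$ plus the cancellation just described, and the analogous, easier check as $s\to1^-$), your constants are indeed bounded by dimensional constants uniformly in $s\in(0,1)$ and the proof goes through.
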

\begin{proof}
These bounds are established in \cite[Proposition 4.6.]{C-Si1}. The bounds \eqref{gradbounds1} follow from the maximum principle. The bound \eqref{gradbounds2} follows by interior elliptic estimates, using a scaling argument and observing that for $s\in(0,1)$ the weight $\lambda^{1-2s}$ is uniformly bounded between universal constants in  the domain $\{ 1\le \lambda\le 2\}$.
\end{proof}

We next give the
\begin{proof}[Proof of Lemma \ref{relationKdown_up_stairs}]
By definition
\[2\tilde \cK(\bs{\tilde u}, C_R) = \littint d\mu(s) \int_{B_R}\int_0^R d(s)\lambda^{1-2s}|\nabla \tilde u_s|^2\,dx\,d\lambda\,.\]
Integrating by parts,
\begin{multline}\label{equaciodemoKKtilla}
\int_{B_R}\int_0^R  d(s) \lambda^{1-2s}|\nabla \tilde u_s|^2\,dx\,d\lambda
 =  \int_{\partial B_R} \int_0^R d(s) \lambda^{1-2s} \tilde u_s \frac{\partial \tilde  u_s}{\partial \nu}\,dS\,d\lambda \,+\\ + \int_{B_R} d(s) R^{1-2s}  (\tilde u_s\partial_\lambda \tilde u_s)|_{\lambda=R}\,dx - \int_{B_R} \bigl(\lim_{\lambda\searrow 0} d(s)\lambda^{1-2s} \partial_\lambda \tilde u_s\bigr)\tilde u_s\,dx\,.
\end{multline}

Using the bounds $\eqref{gradbounds1}$ and $\eqref{gradbounds2}$ ---note that the constants $C_1$ and $C_2$ appearing in these bounds are controlled by $C_0$--- we obtain
\[\begin{split}
\biggl|\int_{\partial B_R} \int_0^R d(s) \lambda^{1-2s} \tilde u_s \frac{\partial \tilde u_s}{\partial \nu}\,dS\,d\lambda\biggr| &\le d(s) C_0^2 R^{n-1} \int_0^R \min\{\lambda^{1-2s},\lambda^{-2s}\}\,d\lambda \\&\le  C C_0^2 \Phi_{n,s_*}(R)
\end{split}\]
for every $s\ge s_*$ and for some constant $C$ depending only on $n$ and $s_*$. Here we have used that $d(s)/(1-s) \le C$ as $s\nearrow 1$ (see \cite{C-Si1}).

Similarly, still using $\eqref{gradbounds1}$ and $\eqref{gradbounds2}$,
\[
\biggl|\int_{B_R} d(s) R^{1-2s}  (\tilde u_s\partial_\lambda \tilde u_s)(x,R)\,dx \biggr|\le R^n d(s) R^{1-2s}C_0^2 R^{-1}\le C C_0^2 \,\Phi_{n,s_*}(R)\,.\]

On the other hand, recall that
\[-\lim_{\lambda\to 0^+} d(s)\lambda^{1-2s} \tilde u_s(x,\lambda) \partial_\lambda \tilde  u_s(x,\lambda) = u(x)(-\Delta)^s u(x)\,.
\]

Therefore, integrating \eqref{equaciodemoKKtilla} with respect to $d \mu(s)$ and using the previous bounds, we have proven
\[\biggl|2\tilde \cK(\bs{\tilde u}, C_R) -  \int_{B_R} u Lu\,dx\biggr| \le C C_0^2\Phi_{n,s_*}(R)\,.
\]

Finally, by the formula of integration by parts  \eqref{int_by_parts_formula} we have
\[
\langle u,u\rangle_{\Omega}- \int_{\Omega} L u(x) u(x)\,dx =  \littint d\mu(s) \, c_n(s) \int_{\cC\Omega}dx\int_{\Omega}dy\frac{u(x)-u(y)}{|x-y|^{n+2s}} u(x)\,.
\]

Thus, using Claim \ref{claim:energy} we obtain
\[
\begin{split}
\biggl|\int_{B_R} u Lu\,dx - \langle u,u\rangle_{B_R}\biggr|  &\le  \left| \littint d\mu(s) \, c_n(s) \int_{\cC B_R}dx\int_{B_R}dy\frac{u(x)-u(y)}{|x-y|^{n+2s}} u(x) \right|
\\
&\le  C C_0^2 \left| \littint d\mu(s) \, c_n(s) \int_{\cC B_R}dx\int_{B_R}dy\frac{\min\{1,|x-y|\}}{|x-y|^{n+2s}} \right|
\\
&\le C C_0^2 \Phi_{n,s_*}(R)\,.
\end{split}
\]
Since by definition $\langle u,u\rangle_\Omega=2\cK(u,\Omega)$, the lemma is proved.
\end{proof}

Next we obtain $\tilde\cE$ energy estimates for the family $\bs{\tilde u}$ of $s$-extensions of a layer solution to \eqref{problema1}.

\begin{lem}\label{lem:eg-est-adalt}
Let $u$ be a layer solution in $\R^n$ of $\eqref{problema1}$. Let $\bs{\tilde u}=\{\tilde u_s\}_{s\in{\rm supp}\,\mu}$ be the family of $s$-extensions of $u$ to $\R^{n+1}_+$. Then,
\[\tilde \cE(\bs{\tilde u}, C_R)\le C \Phi_{n,s_*}(R)\,,\]
where $\Phi_{n,s}(R)$ is given by \eqref{eq_phins} and $C$ depends only on $n$, $s_*$, and $W$.
\end{lem}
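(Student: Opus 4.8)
The plan is to combine the energy estimate for layer solutions in $\R^n$ (Proposition \ref{thm:eg-est}) with the comparison between the kinetic energies downstairs and upstairs (Lemma \ref{relationKdown_up_stairs}). First I would note that, by Proposition \ref{prop-regularity-mzrs}, the layer solution $u$ belongs to $C^{2,\gamma}(\R^n)$ with $\|u\|_{C^{2,\gamma}(\R^n)}\le C_0$, where $C_0$ depends only on $n$, $s_*$, and $W$; in particular $|u|\le 1$. Thus $u$ satisfies the hypotheses of Lemma \ref{relationKdown_up_stairs}, and the family of $s$-extensions $\bs{\tilde u}=\{\tilde u_s\}$ is well defined.

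Next I would write $\tilde \cE(\bs{\tilde u}, C_R) = \tilde \cK(\bs{\tilde u}, C_R) + \int_{\unl{C_R}} W(\unl{\bs{\tilde u}})\,dx = \tilde \cK(\bs{\tilde u}, C_R) + \int_{B_R} W(u)\,dx$, using that the common trace of $\bs{\tilde u}$ is exactly $u$ and that $\unl{C_R}=B_R$. For the potential term I simply have $\int_{B_R} W(u)\,dx \le \cE(u,B_R)$ since $\cK(u,B_R)\ge 0$. For the kinetic term, Lemma \ref{relationKdown_up_stairs} gives
\[\tilde \cK(\bs{\tilde u}, C_R) \le \cK(u,B_R) + C C_0^2\,\Phi_{n,s_*}(R) \le \cE(u,B_R) + C C_0^2\,\Phi_{n,s_*}(R),\]
again using $\int_{B_R} W(u)\,dx\ge 0$. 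Adding the two contributions, $\tilde \cE(\bs{\tilde u}, C_R) \le 2\cE(u,B_R) + C C_0^2\,\Phi_{n,s_*}(R)$.

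Finally I would invoke Proposition \ref{thm:eg-est}, which bounds $\cE(u,B_R)\le C\,\Phi_{n,s_*}(R)$ for $R\ge 2$ with $C$ depending only on $n$, $s_*$, and $W$. Absorbing $C_0$ (which depends only on $n$, $s_*$, $W$) into the constant, this yields $\tilde \cE(\bs{\tilde u}, C_R) \le C\,\Phi_{n,s_*}(R)$ as claimed. In this sense there is no real obstacle: the lemma is essentially a bookkeeping corollary of the two previously established results. The only point requiring a modicum of care is making sure the constant $C_0$ from the interior regularity estimate is genuinely independent of $R$ and of the particular layer solution—which it is, by Proposition \ref{prop-regularity-mzrs}—so that all constants in the final bound depend only on $n$, $s_*$, and $W$.
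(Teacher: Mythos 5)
Your proposal is correct and is exactly the argument the paper intends: the paper's own proof of this lemma is the one-line statement that it follows from Proposition \ref{thm:eg-est} combined with Proposition \ref{prop-regularity-mzrs} and Lemma \ref{relationKdown_up_stairs}, and you have simply written out that combination in detail. The bookkeeping (splitting $\tilde\cE$ into kinetic and potential parts, bounding each by $\cE(u,B_R)$ plus the error term from Lemma \ref{relationKdown_up_stairs}, and noting that $C_0$ from the regularity estimate depends only on $n$, $s_*$, $W$) is all as it should be.
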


\begin{proof}
It is a consequence of  Proposition  \ref{thm:eg-est}, combined with Proposition \ref{prop-regularity-mzrs}  and Lemma \ref{relationKdown_up_stairs}.
\end{proof}

\section{Liouville-type theorem and 1-D symmetry}\label{sec7}

In this section we obtain a Liouville theorem within the frame of the extension system \eqref{system_extended}.

\begin{thm}\label{thm_liouville}
Let $\bs{\sigma} = \{\sigma_s\}_{s\in {\rm supp}\,\mu}$  satisfy
\begin{equation}\label{eqthmlouville}
\left\{
\begin{array}{lll}
-\sigma_s\nabla\cdot(\lambda^{1-2s}\varphi_s^2\nabla\sigma_s)\le 0\quad&\text{in }\R^{n+1}_+\,,\ &\text{for each }s\,,\\
\sigma_s(x,0) = \unl{\bs{\sigma}}(x) &\text{on }\R^n\,, &\text{for each }s\,,\\
-\littint d\mu(s)\, {d(s)} \,\unl{\bs{\sigma}} \unl{\bs{\varphi}}^2\, \lim_{\lambda\searrow 0} \lambda^{1-2s} \partial_\lambda \sigma_s   \le 0 \quad&\text{on }\R^n\,,&
\end{array}
\right.
\end{equation}
where $\bs{\varphi}= \{\varphi_s\}_{s\in {\rm supp}\, \mu}$ is a family of positive continuous functions having common trace on $\R^n$.  Assume that $\lambda^{1-2s}\varphi_s^2|\nabla \sigma_s|^2\in L^1_{\text{loc}}(\overline{\R^{n+1}_+})$, for every $s\in {\rm supp}\,\mu$.

Suppose, in addition, that for $R\ge2$,
\begin{equation}\label{hipotesiscreixement}
\littint d\mu(s)\, d(s) \int_{C_{R}} \lambda^{1-2s}(\varphi_s\sigma_s)^2\,dxd\lambda \le C R^2 F(R)\,,
\end{equation}
for some constant $C$ independent of $R$, and some nondecreasing function $F:\R_+\rightarrow\R_+$ such that
\[\sum_{j=1}^{\infty} \frac{1}{F(2^{j+1})}= +\infty\,.\]

Then, $\sigma$ is constant.

\begin{proof}
We adapt the proof of Moschini \cite[Theorem 5.1]{mosch}. Since $\sigma$ satisfies \eqref{eqthmlouville}, we have
\begin{equation}\label{desigthm_lv1}
\nabla\cdot(\sigma_s \lambda^{1-2s}\varphi_s^2\nabla\sigma_s) \ge \lambda^{1-2s}\varphi_s^{2}|\nabla\sigma_s|^2\,,
\end{equation}
for each $s$. On the other hand,
\begin{equation}\label{desigthm_lv2}
\int_{\partial^+C_{R}} \sigma_s \lambda^{1-2s}\varphi_s^2\frac{\partial\sigma_s}{\partial\nu} \,dS \le \biggr(\int_{\partial^+C_{R}}\lambda^{1-2s}\varphi_s^{2}|\nabla\sigma_s|^2\,dS\biggl)^{\frac{1}{2}} \biggr(\int_{\partial^+C_{R}}\lambda^{1-2s}(\varphi_s\sigma_s)^2 \,dS\biggl)^{\frac{1}{2}}\,,
\end{equation}
where $\partial^+C_{R}= \partial C_R \setminus\{\lambda=0\}$, and $\nu$ is the unit outer normal to $\partial^+C_{R}$. Now, set
\[D(R)=\littint d\mu(s)\, d(s) \int_{C_{R}} \lambda^{1-2s} \varphi_s^2 |\nabla\sigma_s|^2\,dx\,d\lambda\,.\]

Let us write $d\tilde\mu(s) = d(s)d\mu(s)$. Using \eqref{desigthm_lv1}, the boundary condition in \eqref{eqthmlouville}, \eqref{desigthm_lv2}, and Schwartz inequality we obtain
\[\begin{split}
D(R)&\le \littint d\tilde\mu(s) \int_{C_{R}} \nabla\cdot(\sigma_s \lambda^{1-2s}\varphi_s^2\nabla\sigma_s)\,dx\,d\lambda \\
&\le \littint d\tilde\mu(s) \int_{\partial^+C_{R}} \sigma_s \lambda^{1-2s}\varphi_s^2\frac{\partial\sigma_s}{\partial\nu} \,dS
\\
&\le\littint d\tilde\mu(s) \biggr(\int_{\partial^+C_{R}}\lambda^{1-2s}\varphi_s^{2}|\nabla\sigma_s|^2\,dS\biggl)^{\frac{1}{2}} \biggr(\int_{\partial^+C_{R}} \lambda^{1-2s}(\varphi_s\sigma_s)^2 \,dS\biggl)^{\frac{1}{2}}
\\
&\le \biggr(\littint d\tilde\mu (s) \int_{\partial^+C_{R}}\lambda^{1-2s}\varphi_s^{2}|\nabla\sigma_s|^2\,dS\biggl)^{\frac{1}{2}} \biggr(\littint d\tilde\mu(s) \int_{\partial^+C_{R}} \lambda^{1-2s}(\varphi_s\sigma_s)^2 \,dS\biggl)^{\frac{1}{2}}
\\
&=D'(R)^{\frac{1}{2}} \biggr(\littint d\tilde\mu(s) \int_{\partial^+C_{R}} \lambda^{1-2s}(\varphi_s\sigma_s)^2 \,dS\biggl)^{\frac{1}{2}}\,.
\end{split}\]
Therefore, if $D(R)>0$,
\begin{equation}\label{eqdemth_lv_3}
\biggr({\littint d\tilde\mu(s)}\int_{\partial^+C_{R}} \lambda^{1-2s}(\varphi_s\sigma_s)^2 \,dS\biggl)^{-1} 
\le \frac{D'(R)}{D(R)^2}.
\end{equation}

Suppose by contradiction that $\bs{\sigma}$ were not constant. Then, for some $R_0>0$, $D(R)>0$ for every $R>R_0$. Integrating \eqref{eqdemth_lv_3} and using Schwartz inequality, we get that, for every $r_2>r_1>R_0$,
\begin{equation}\label{eqdemth_lv_4}
\begin{split}
\frac{1}{D(r_1)}-\frac{1}{D(r_2)}&\ge\int_{r_1}^{r_2}dR\biggr({\littint d\tilde\mu(s)}\int_{\partial^+C_{R}} \lambda^{1-2s}(\varphi_s\sigma_s)^2 \,dS\biggl)^{-1}\\
&\ge(r_2-r_1)^2\biggr({\littint d\tilde\mu(s)}\int_{r_1}^{r_2}dR \int_{\partial^+C_{R}} \lambda^{1-2s}(\varphi_s\sigma_s)^2 \,dS\biggl)^{-1}
\\
&\ge(r_2-r_1)^2\biggr({\littint d\tilde\mu(s)} \int_{C_{r_2}\setminus C_{r_1}} \lambda^{1-2s}(\varphi_s\sigma_s)^2 \,dx\,d\lambda\biggl)^{-1}.
\end{split}\end{equation}

Next, choose $r_2=2^{j+1}$ and $r_1=2^j$ with $j\ge N_0$ such that $2^{N_0}>R_0$. Using \eqref{hipotesiscreixement}, \eqref{eqdemth_lv_4} and summing over $j$, $N_0\le j\le N$, we find
\[\frac{1}{D(2^{N_0})} \ge \frac{1}{4C}\sum_{j=N_0}^{N} \frac{1}{F(2^{j+1})}\,.\]
But, by the hypothesis on $F$, the sum  \[\sum_{j=N_0}^{\infty} \frac{1}{F(2^{j+1})}=+\infty\,,\]
which is a contradiction.
\end{proof}
\end{thm}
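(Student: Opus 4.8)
The plan is to run a weighted-energy (Moschini-type) integral identity for degenerate-elliptic Liouville theorems; the one genuinely new point compared with the single-operator case is that the boundary inequality in \eqref{eqthmlouville} couples the whole family $\bs{\sigma}$, so the entire scheme must be carried out for quantities averaged against $d\tilde\mu(s):=d(s)\,d\mu(s)$ rather than componentwise. First I would expand the divergence to get, for each $s$ and from the first line of \eqref{eqthmlouville},
\[
\nabla\cdot\bigl(\sigma_s\,\lambda^{1-2s}\varphi_s^2\nabla\sigma_s\bigr)=\sigma_s\,\nabla\cdot\bigl(\lambda^{1-2s}\varphi_s^2\nabla\sigma_s\bigr)+\lambda^{1-2s}\varphi_s^2|\nabla\sigma_s|^2\ \ge\ \lambda^{1-2s}\varphi_s^2|\nabla\sigma_s|^2\quad\text{in }\R^{n+1}_+ .
\]
Setting
\[
D(R)=\littint d\tilde\mu(s)\int_{C_R}\lambda^{1-2s}\varphi_s^2|\nabla\sigma_s|^2\,dx\,d\lambda ,
\]
which is finite and nondecreasing by the $L^1_{\rm loc}$ hypothesis, I would integrate the displayed inequality over the cylinder $C_R$ (see \eqref{cylinder}), apply the divergence theorem for each $s$, and then integrate against $d\tilde\mu(s)$. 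Writing $\partial^+C_R=\partial C_R\setminus\{\lambda=0\}$, the bottom portion $B_R\times\{0\}$ contributes $-\int_{B_R}\unl{\bs{\sigma}}\,\unl{\bs{\varphi}}^{\,2}\littint d\tilde\mu(s)\,\lim_{\lambda\searrow0}\lambda^{1-2s}\partial_\lambda\sigma_s\,dx$, which is $\le0$ by the third line of \eqref{eqthmlouville}; dropping it yields
\[
D(R)\le \littint d\tilde\mu(s)\int_{\partial^+C_R}\sigma_s\,\lambda^{1-2s}\varphi_s^2\,\frac{\partial\sigma_s}{\partial\nu}\,dS .
\]

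Next I would estimate the right-hand side by the Cauchy--Schwarz inequality on each surface integral and then by Cauchy--Schwarz in the measure $d\tilde\mu(s)$, obtaining
\[
D(R)\le D'(R)^{1/2}\,G(R)^{1/2},\qquad G(R):=\littint d\tilde\mu(s)\int_{\partial^+C_R}\lambda^{1-2s}(\varphi_s\sigma_s)^2\,dS ,
\]
where, by the coarea formula, $D$ is differentiable for a.e.\ $R$ with $D'(R)=\littint d\tilde\mu(s)\int_{\partial^+C_R}\lambda^{1-2s}\varphi_s^2|\nabla\sigma_s|^2\,dS$. Squaring gives $D(R)^2\le D'(R)G(R)$, that is, wherever $D(R)>0$,
\[
-\Bigl(\tfrac1D\Bigr)'(R)=\frac{D'(R)}{D(R)^2}\ \ge\ \frac1{G(R)} .
\]

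To conclude I would argue by contradiction. If $\bs{\sigma}$ is not constant, then since every $\varphi_s$ is positive $D$ cannot vanish identically, and being nondecreasing it satisfies $D(R)>0$ for all $R\ge R_0$, some $R_0>0$. Integrating the last inequality between $R_0<r_1<r_2$, using Cauchy--Schwarz once more and then \eqref{hipotesiscreixement} with radius $r_2$,
\[
\frac1{D(r_1)}-\frac1{D(r_2)}\ \ge\ \int_{r_1}^{r_2}\frac{dR}{G(R)}\ \ge\ \frac{(r_2-r_1)^2}{\littint d\tilde\mu(s)\int_{C_{r_2}\setminus C_{r_1}}\lambda^{1-2s}(\varphi_s\sigma_s)^2\,dx\,d\lambda}\ \ge\ \frac{(r_2-r_1)^2}{C\,r_2^2\,F(r_2)} .
\]
Taking $r_1=2^j$ and $r_2=2^{j+1}$ makes the $j$-th increment at least $\tfrac1{4C\,F(2^{j+1})}$; summing over $N_0\le j\le N$, with $2^{N_0}>R_0$, telescopes the left side and gives $1/D(2^{N_0})\ge \tfrac1{4C}\sum_{j=N_0}^{N}1/F(2^{j+1})$, whose right side diverges as $N\to\infty$ by the hypothesis on $F$ --- a contradiction. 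Hence $\bs{\sigma}$ is constant.

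The step I expect to be the main obstacle is making the integration by parts on the degenerate-weighted cylinder rigorous: one must justify the divergence theorem for the $A_2$ weight $\lambda^{1-2s}$ down to $\{\lambda=0\}$ and identify the bottom flux with the trace quantity $\lim_{\lambda\searrow0}\lambda^{1-2s}\partial_\lambda\sigma_s$ that appears in \eqref{eqthmlouville} --- which is precisely what the local integrability assumption on $\lambda^{1-2s}\varphi_s^2|\nabla\sigma_s|^2$ is tailored to permit --- and, crucially, one must note that the sign of this bottom term is available only \emph{after} averaging against $d\tilde\mu(s)=d(s)\,d\mu(s)$, which forces the whole Cauchy--Schwarz bookkeeping (and the definition of $D$) to be done for the $\mu$-integrated functionals rather than for each $s$ separately.
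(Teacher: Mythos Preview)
Your proposal is correct and follows essentially the same approach as the paper's proof: both adapt Moschini's weighted-energy argument, define the same $D(R)$ averaged against $d\tilde\mu(s)=d(s)\,d\mu(s)$, use the divergence theorem to drop the (nonpositive) bottom-boundary contribution via the third line of \eqref{eqthmlouville}, apply Cauchy--Schwarz twice (first on $\partial^+C_R$, then in $d\tilde\mu$) to reach $D(R)^2\le D'(R)G(R)$, and conclude by the dyadic telescoping sum with $r_1=2^j$, $r_2=2^{j+1}$. Your explicit remark that the coupling in the boundary condition forces the whole scheme to be carried out for the $\mu$-integrated quantities is exactly the point that distinguishes this from the single-$s$ case, and the technical concern you flag about the integration by parts at $\lambda=0$ is handled in the paper at the same heuristic level you describe.
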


We finally prove 1-D symmetry of layer solutions to \eqref{problema1} in dimension two and, with the additional hypothesis $s_*\ge 1/2$, in dimension three.

\begin{proof}[Proof of Theorem \ref{thm:1dsym}]
From $u$ we construct the family of $s$-extensions $\{\tilde u_s\}$.
Given $i<n$, we consider the families $\bs{\sigma^i}= \{( \partial_n \tilde u_s)^{-1} \partial_i \tilde u_s\}$ 
and $\bs{\varphi}=\{\partial_n \tilde u_s\}$.
Observe that both families have common trace, namely, $\unl{\sigma^i}= (\partial_n  u)^{-1}\partial_i  u$ and $\unl \varphi = \partial_n u$ on $\R^n$.
Let us show that these families, for each $i$, satisfy the assumptions of Theorem \ref{thm_liouville}.

Indeed, we have
\[\begin{split}
\nabla\cdot(\lambda^{1-2s}\varphi_s^2\nabla\sigma^i_s)
&= \nabla\cdot\bigl(\lambda^{1-2s}(\partial_n \tilde{u}_s\partial_i \nabla \tilde{u}_s-\partial_i \tilde{u}_s\partial_n\nabla \tilde{u}_s)\bigr)
\\&= \lambda^{1-2s}\bigr(\partial_n\nabla \tilde{u}_s\cdot\partial_i \nabla \tilde{u}_s-\partial_i \nabla \tilde{u}_s\cdot\partial_n\nabla \tilde{u}_s\bigr)
\\&\qquad \qquad + \partial_n \tilde{u}_s\partial_i(\nabla\cdot(\lambda^{1-2s}\nabla \tilde{u}_s))-\partial_i \tilde{u}_s\partial_n(\nabla\cdot(\lambda^{1-2s}\nabla \tilde{u}_s))
\\&=0 \quad\mbox{ in }\R^{n+1}_+,
\end{split}
\]
for each $s\in{\rm supp}\,\mu$.
We now compute the flux on $\R^n=\{\lambda=0\}$. Here we also use the notation $\unl{\lambda^{1-2s}\partial_\lambda v_s}$ for its limit as $\lambda\searrow 0$ (even in cases in which these limits are not common for all $s$). Denoting $d\tilde \mu(s)= d(s)\,d\mu(s)$ we have
\[\begin{split}
\littint  \unl{\sigma^i} \,\unl{\varphi}^2\,\unl{\lambda^{1-2s}\partial_\lambda \sigma^i_s}\, d\tilde\mu(s)
&= \littint \unl{\sigma^i} (\partial_n u)^2 \biggl(\frac{\partial_i(\unl{\lambda^{1-2s}\partial_\lambda \tilde{u}_s})\, \partial_n u
-\partial_n(\unl{\lambda^{1-2s}\partial_\lambda \tilde{u}_s})\,
\partial_i u}
{(\partial_n u)^2}\biggr)\,d\tilde\mu(s)
\\&=\unl{\sigma^i} (\partial_n u\partial_i-\partial_i u\partial_n)\littint \unl{\lambda^{1-2s}\partial_\lambda \tilde{u}_s}\,
d\tilde\mu(s)
\\&=\unl{\sigma^i} (\partial_i u\partial_n-\partial_n u\partial_i) f(u)
\\&=\unl{\sigma^i} (\partial_i u\partial_nu-\partial_nu\partial_iu)f'(u)
\\&\equiv 0\,, \quad \mbox{on }\R^n.
\end{split}
\]

Moreover, by Lemma \ref{lem:eg-est-adalt} and by the assumptions of the theorem, we have that
\[\begin{split}
\littint \,d\tilde \mu(s) \int_{C_{R}} \lambda^{1-2s}(\varphi_s\sigma^i_s)^2\,dx\,d\lambda &=
\littint \,d\tilde \mu(s)\int_{C_{R}} \lambda^{1-2s}(\partial_i u)^2\,dxd\,d\lambda \\
&\le \tilde\cE(\boldsymbol{\tilde u},C_R)\\
&\le C\Phi_{n,s_*} (R)
\end{split}\]
for some constant $C$ independent of $R$.

Next, either if $n=2$ and $s_*\in(0,1)$, or if $n=3$ and $s_*\ge 1/2$, we have
\[ \Phi_{n,s_*} (R) \le C R^2 \log(R).\]

Finally, since the function $F(R)= R^2 \log(R)$ satisfies the assumption of Theorem  \ref{thm_liouville},
it follows that $\bs{\sigma^i}$ is equal to a constant $a^i$, for $i<n$. That is,  $\nabla u =(a^1,1)\partial_2 u$, if $n=2$, or $\nabla u = (a^1,a^2,1)\partial_3 u$, if $n=3$. Equivalently, $u$ has 1-D symmetry.
\end{proof}

Finally, let us point out the following

\begin{rem}\label{Rk1}
Theorem \ref{thm:1dsym} also holds when $\mu(\{1\})>0$, with essentially the same proof. The assumption
$\mu(\{1\})=0$ is done only because it simplifies significantly the notation throughout the paper. Indeed,
when $\mu(\{1\})=0$ we do not need to deal with the two different expressions  \eqref{energyfunct1} and
\eqref{energyfunct1s=1} for the Dirichlet quadratic forms $\mathcal K^s$ ---for $s\in (0,1)$ and $s=1$ respectively---
and only \eqref{energyfunct1} suffices. Up to dealing with lengthier expressions, our proof can be immediately
adapted to the case $\mu(\{1\})>0$. For instance, note that when $\mu(\{1\})>0$ the third equation of the extension
problem \eqref{system_extended} would be replaced by
$$
- \littint d(s)\lim_{\lambda\to 0^+} \lambda^{1-2s} \partial_{\lambda} \tilde{u}_s (x,\lambda) \,d\mu(s) -
\mu(\{1\}) \Delta u = f(u) \qquad\text {on } \{\lambda =0\},
$$
where $f=-W'$. The remaining proofs can be accordingly adapted.
\end{rem}


\end{document}